\documentclass{amsart}
\usepackage{amsmath,amsthm,amssymb}
\usepackage{graphicx}
\usepackage{tikz-cd}
\usepackage{paralist}
\usepackage{environ}
\usepackage{xcolor}
\usepackage{hyperref}
\usepackage{centernot}

\usepackage{marvosym}
\usepackage{xcolor,cancel}

\usepackage{booktabs}
\usepackage{tabularx}
\usepackage{multirow,bigstrut}
\usepackage{bigstrut}
\usepackage{CJKutf8}


\makeatletter
\DeclareFontFamily{U}{tipa}{}
\DeclareFontShape{U}{tipa}{bx}{n}{<->tipabx10}{}
\newcommand{\arc@char}{{\usefont{U}{tipa}{bx}{n}\symbol{62}}}%

\newcommand{\arc}[1]{\mathpalette\arc@arc{#1}}

\newcommand{\arc@arc}[2]{%
  \sbox0{$\m@th#1#2$}%
  \vbox{
    \hbox{\resizebox{\wd0}{\height}{\arc@char}}
    \nointerlineskip
    \box0
  }%
}
\makeatother

\makeatletter
\newcommand{\doublewedge}{\big@doubleop{\wedge}}
\newcommand{\big@doubleop}[1]{%
  \DOTSB\mathop{\mathpalette\big@doubleop@aux{#1}}\slimits@
}

\newcommand\big@doubleop@aux[2]{%
  \sbox\z@{$\m@th#1#2$}%
  \makebox[1.35\wd\z@][s]{$\m@th#1#2\hss#2$}%
}

\newcommand{\abs}[1]{\left|#1\right|}     
\newcommand{\rb}{\mbox{rb}}
\newcommand{\re}{\mbox{re}}

\newcommand{\cl}{\mbox{cl}}
\newcommand{\dcl}{\mbox{cl}_{\Phi}}
\newcommand{\Int}{\mbox{int}}
\newcommand{\bdy}{\mbox{bdy}}
\newcommand{\er}{\mbox{He}}
\newcommand{\Hn}{\mbox{Hn}}
\newcommand{\Hb}{\mbox{Hb}}

\newcommand{\near}{\delta} 
\newcommand{\dnear}{\delta_{\Phi}} 
\newcommand{\dcap}{\mathop{\cap}\limits_{\Phi}} 

\newcommand{\sh}{\mbox{sh}}

\newcommand{\cyc}{\mbox{cyc}}



 %


\usepackage{pstricks,pst-text,pst-grad,pst-node,pst-3dplot,pstricks-add,pst-poly,pst-coil,pst-bar,pst-all,pst-plot,pst-2dplot} 
\usepackage{pst-fun,pst-blur}
\usepackage{pst-all}
\usepackage{subfigure}
\renewcommand{\thesubfigure}{\thefigure.\arabic{subfigure}}
\makeatletter
\renewcommand{\p@subfigure}{}
\renewcommand{\@thesubfigure}{\thesubfigure:\hskip\subfiglabelskip}
\makeatother

\theoremstyle{plain}
\newtheorem{theorem}{Theorem}
\newtheorem{lemma}{Lemma}
\newtheorem{remark}{Remark}
\newtheorem{definition}{Definition}

\newtheorem{example}{Example}

\begin{document}

\title[Amiable Fixed Sets]{Amiable and Almost Amiable Fixed Sets.  Extension of the Brouwer Fixed Point Theorem}

\author[James F. Peters]{James F. Peters}
\address{
Computational Intelligence Laboratory,
University of Manitoba, WPG, MB, R3T 5V6, Canada and
Department of Mathematics, Faculty of Arts and Sciences, Ad\.{i}yaman University, 02040 Ad\.{i}yaman, Turkey}
\thanks{The research has been supported by the Natural Sciences \&
Engineering Research Council of Canada (NSERC) discovery grant 185986 
and Instituto Nazionale di Alta Matematica (INdAM) Francesco Severi, Gruppo Nazionale per le Strutture Algebriche, Geometriche e Loro Applicazioni grant 9 920160 000362, n.prot U 2016/000036 and Scientific and Technological Research Council of Turkey (T\"{U}B\.{I}TAK) Scientific Human
Resources Development (BIDEB) under grant no: 2221-1059B211301223.}

\subjclass[2010]{54E05 (Proximity); 55R40 (Homology); 68U05 (Computational Geometry); 55M20 (Fixed Points)}

\date{}

\dedicatory{Dedicated to Sanjo Zlobec, honoring his 80th birthday}

\keywords{Amiable, Boundary Region, CW space, Descriptive Fixed Set, Descriptive Proximally Continuous Map, Descriptive Proximity, Wide Ribbon}
%

\begin{abstract}
This paper introduces shape boundary regions in descriptive proximity forms of CW (Closure-finite Weak) spaces as a source of amiable fixed subsets as well as almost amiable fixed subsets of descriptive proximally continuous (dpc) maps. A dpc map is an extension of an Efremovi\v{c}-Smirnov proximally continuous (pc) map introduced during the early-1950s by V.A. Efremovi\v{c} and Yu. M. Smirnov. 
Amiable fixed sets and the Betti numbers of their free Abelian group representations are derived from dpc's relative to the description of the boundary region of the sets.  Almost amiable fixed sets are derived from dpc's by relaxing the matching description requirement for the descriptive closeness of the sets.  This relaxed form of amiable fixed sets works well for applications in which closeness of fixed sets is approximate rather than exact.
A number of examples of amiable fixed sets are given in terms of wide ribbons.  A bi-product of this work is a variation of the Jordan Curve Theorem and a Fixed Cell Complex Theorem, which is an extension of the Brouwer Fixed Point Theorem.
\end{abstract}

\maketitle
\tableofcontents

\section{Introduction}
This article introduces the boundary region of a cell complex in a Closure-finite Weak (CW) space as a source of amiable fixed sets (introduced in~\cite[\S 3, p. 9]{PetersVergili2020descriptiveFixedSets}).  Amiable fixed sets are a byproduct of descriptive proximally continuous maps that spawn fixed sets.  Briefly, the boundary region of a planar shape $\sh E$ (denoted by $\partial\sh E$) in a space $X$ is the set of all points in $X$ external to the closure of the shape.  In keeping with recent work on characterizing fixed points~\cite{SanjoZlobec2017fixedPoints}, this paper introduces a sufficient condition for a function to have a fixed set.  This leads to an extension of Brouwer's fixed point theorem and carries over to a class of continuous functions from a CW space to itself.

\begin{remark}
Let $\bdy(\sh E)$ be the contour (bounding edge) of a planar shape $\sh E$ in a space $X$. Contour $\bdy(\sh E)$ is a simple, closed curve. Let $\Int(\sh E)$ be the set of all points in the interior of $\sh E$.  
The closure of $\sh E = \bdy(\sh E)\cup\Int(\sh E)$ (denoted by $\cl(\sh E)$).  Let $\partial(\cl(\sh E))$ denote the boundary region external to $\cl(\sh E)$ and define
\begin{align*}
\partial(\cl(\sh E)) &= X\setminus (\bdy(\sh E)\cup\Int(\sh E)).\ \mbox{Then}\\
\bdy(\sh E) &= X\setminus (\Int(\sh E)\cup \partial(\cl(\sh E))).\\
\Int(\sh E) &= X\setminus (\bdy(\sh E)\cup \partial(\cl(\sh E))).
\end{align*}

That is, the boundary region $\partial\sh E$ of shape $\sh E$ is the set of all points not in $\cl(\sh E)$. The contour of a shape is commonly known as the \emph{boundary} of a simplicial complex and every such boundary is a cycle~\cite[p. 104]{Goblin2010CUPhomology}.
\end{remark}

A descriptive proximally continuous map~\cite[\S 1.7, p. 16]{Naimpally2013},~\cite[\S 5.1]{DiConcilio2018MCSdescriptiveProximities} is an extension of an Efremovi\v{c}-Smirnov proximally continuous map, first introduced by V.A. Efremovi\v{c}~\cite{Efremovic1952} and Yu. M. Smirnov~\cite{Smirnov1952,Smirnov1952a} in 1952.  In its simplest form, proximal continuity is defined on a \v{C}ech proximity space $(X,\near)$, in which $\near$ is a proximity relation that satisfies a number of axioms, especially $A\cap B\neq \emptyset$ implies $A\ \near\ B$ for subsets $A,B$ in $X$~\cite[\S 4, p. 20]{Naimpally70}.  A map $f:(X,\near)\to (X,\near)$ is \emph{proximally continuous}, provided, for each pair of subsets $A,B\subset X$, $A\ \near\ B$ implies $f(A)\ \near\ f(B)$.

Descriptive proximal continuity is an easy step beyond  proximal continuity. For a \v{C}ech proximity space $X$ containing a nonempty subset $A\in 2^X$ (collection of subsets of $X$), a probe function $\Phi:2^X\to \mathbb{R}^n$is defined by a feature vector $\Phi(A)$ that describes $A$, defined by\footnote{This form of probe function for the description of a set (suggested by A. Di Concilio) first appeared in~\cite{DiConcilio2018MCSdescriptiveProximities}.} $\Phi(A\in 2^X) = \left\{\Phi(a):a\in A\right\}$.  
Let $(X,{\dnear}_1), (Y,{\dnear}_2)$ be a pair of descriptive proximity spaces, equipped with proximity relations ${\dnear}_1,{\dnear}_2$.  For a pair of subsets $A,B\subset X$, $A\ {\dnear}_1 \ B$ reads $A$ is descriptively close to $B$. What we mean by \emph{descriptively close} depends on the definition of $\Phi(A\in 2^X)$. 

{\begin{example}
For a single-holed ring torus {\huge $\boldsymbol{\circledcirc}$} $E$ in space $X$, define $\Phi(e_1)$ = diameter of {\huge $\boldsymbol{\circledcirc}$} $E$ hole, $\Phi(e_2)$ = {\huge $\boldsymbol{\circledcirc}$} $E$ inner radius, and $\Phi(e_3)$ = {\huge $\boldsymbol{\circledcirc}$} $E$ outer radius.
Then $\Phi(\mbox{\huge $\boldsymbol{\circledcirc}$}$ $E)$ = $\left(\Phi(e_1),\Phi(e_2),\Phi(e_3)\right)$, feature vector that describes {\huge $\boldsymbol{\circledcirc}$} $E$.
\textcolor{blue}{\Squaresteel}
\end{example}

A map $f:(X,{\dnear}_1)\to (Y,{\dnear}_2)$ is a descriptive proximally continuous (dpc) map~\cite[\S 1.20.1, p. 48]{Peters2013springer}, provided there is at least one pair of subsets $A,B\subset X$ such that  $A\ {\dnear}_1\ B$ implies $f(A)\ {\dnear}_2\ f(B)$.
Two important bi-products of dpc maps are descriptive fixed sets and amiable fixed sets, provided the dpc maps satisfy certain conditions (see Def.~\ref{def:desProxCont}). 

For a dpc map $f$ on a CW space $X$ equipped with the proximity $\dnear$, a subset $E\subset X$ is fixed, provided the description of $f(E)$ matches the description of $E$.  Briefly, a nonvoid collection of cell complexes $X$ is a CW space~\cite[\S III, starting on page 124]{AlexandroffHopf1935Topologie},~\cite[pp. 315-317]{Whitehead1939homotopy}, ~\cite[\S 5, p. 223]{Whitehead1949BAMS-CWtopology}, provided $X$ is Hausdorff (every pair of distinct cells is contained in disjoint neighbourhoods~\cite[\S 5.1, p. 94]{Naimpally2013}) and the collection of cell complexes in $K$ satisfy the containment condition (the closure of each cell complex is in $K$ and each cell has only a finite number of faces~\cite{Switzer2002CWcomplex}) and intersection condition (the nonempty intersection of cell complexes is in $K$)
 

Results given here for dpc maps spring from the fundamental result for fixed points given by L.E.J. Brouwer~\cite{Brouwer1911fixedPoints}   

\begin{theorem}\label{thm:Brouwer}{\rm Brouwer Fixed Point Theorem~\cite[\S 4.7, p. 194]{Spanier1966AlgTopology}}$\mbox{}$\\
Every continuous map from $\mathbb{R}^n$ to itself has a fixed point.
\end{theorem}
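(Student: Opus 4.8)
The plan is to prove the standard form of the result, namely that every continuous map $f\colon D^n \to D^n$ of the closed $n$-ball to itself has a fixed point; note that the literal statement ``from $\mathbb{R}^n$ to itself'' must be read as applying to a compact convex body such as $D^n$, since a translation of $\mathbb{R}^n$ (for instance $x \mapsto x+1$) is a fixed-point-free continuous self-map. First I would reduce the theorem to the \emph{no-retraction theorem}: there is no continuous retraction $r\colon D^n \to S^{n-1}$ of the ball onto its boundary sphere $S^{n-1} = \partial D^n$. This reduction is the geometric heart of the argument and is entirely elementary once the no-retraction theorem is granted.

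The reduction proceeds by contradiction. Suppose $f$ has no fixed point, so that $f(x)\neq x$ for every $x\in D^n$. Then for each $x$ the two distinct points $f(x)$ and $x$ determine a unique ray starting at $f(x)$ and passing through $x$, and I would let $r(x)$ be the point at which this ray meets $S^{n-1}$. Writing $r(x) = x + t(x)\,(x - f(x))$ and imposing $\|r(x)\| = 1$ gives a quadratic in the parameter $t$ whose relevant root depends continuously on $x$ (the discriminant is strictly positive because $f(x)\neq x$), so $r$ is continuous; moreover $r(x)=x$ whenever $x\in S^{n-1}$. Hence $r$ is a retraction of $D^n$ onto $S^{n-1}$, contradicting the no-retraction theorem.

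The substantive work is therefore the no-retraction theorem, which I would establish using singular (equivalently, cellular) homology, in keeping with the homological language of this paper. Applying the functor $H_{n-1}(-)$ to the composite $S^{n-1} \hookrightarrow D^n \xrightarrow{r} S^{n-1}$, which equals the identity on $S^{n-1}$, yields a factorization of the identity on $H_{n-1}(S^{n-1})$ through $H_{n-1}(D^n)$. I would then invoke the computations $H_{n-1}(S^{n-1})\cong \mathbb{Z}$ and $H_{n-1}(D^n)=0$, the latter because $D^n$ is contractible. Since the identity map on $\mathbb{Z}$ cannot factor through the trivial group, no such retraction $r$ exists, which completes the proof.

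The step I expect to be the main obstacle is the homological input itself: the computation $H_{n-1}(S^{n-1})\cong\mathbb{Z}$ (typically obtained by induction using the Mayer--Vietoris sequence or the long exact sequence of the pair $(D^n,S^{n-1})$) together with the degenerate case $n=1$, where $S^0$ is disconnected and the argument must instead be phrased via the reduced group $\tilde H_0$ and the intermediate value theorem rather than a higher homology group. Once these computations are in hand, the retraction construction, the functoriality of $H_{n-1}$, and the impossibility of the resulting factorization are all formal, so essentially all of the difficulty is concentrated in the homology of spheres and disks.
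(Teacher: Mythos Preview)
The paper does not supply its own proof of Theorem~\ref{thm:Brouwer}; it merely states the result with a citation to Spanier's textbook. Your proposal is therefore not competing against anything in the paper, and the argument you outline---construct a retraction $D^n\to S^{n-1}$ from a hypothetical fixed-point-free map, then derive a contradiction from $H_{n-1}(S^{n-1})\cong\mathbb{Z}$ versus $H_{n-1}(D^n)=0$---is exactly the standard homological proof one finds in Spanier and elsewhere, so it is entirely appropriate here.

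Your side remark is also well taken: the theorem as literally stated in the paper (``every continuous map from $\mathbb{R}^n$ to itself'') is false, since $x\mapsto x+v$ for any nonzero $v$ is a continuous self-map of $\mathbb{R}^n$ with no fixed point. The correct statement requires a compact convex domain such as $D^n$, and you are right to make this reinterpretation explicit before proceeding.
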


\begin{figure}[!ht]
\centering
\subfigure[Shape $\sh E = \blacktriangle E_1\cup \left\{\blacktriangle E_2\cup \blacktriangle E_3\right\}$ (3 filled triangles with a common vertex $v_0$)\ \&\ shape boundary region $\partial (\sh E)$]
 {\label{fig:shEBoundary}
\begin{pspicture}
(-1.5,-0.8)(4.0,3.0)
\psframe*[linewidth=0.75pt,linearc=0.25,cornersize=absolute,linestyle=solid,linecolor=orange!20](-1.25,-0.25)(3.25,3)
\psframe[linewidth=0.75pt,linearc=0.25,cornersize=absolute,linestyle=solid](-1.25,-0.25)(3.25,3)
\psline*[linecolor = green!50]%
(0.0,1)(1.85,1.85)(2.95,1.25)(0.0,1)
(-0.3,2)(1,2.5)(1.85,1.85)(0.0,1)
\psline[linecolor = black]%
(0.0,1)(1.85,1.85)(0.0,1)(2.95,1.25)(0.0,1)
(-0.3,2)(1,2.5)(1.85,1.85)(0.0,1)
\psline[linecolor = black]%
(0.0,1)(1,2.5)
\psline[linecolor = black]%
(1.85,1.85)(2.95,1.25)
\psdots[dotstyle=o, dotsize=1.3pt 2.25,linecolor = black, fillcolor = yellow]%
(0.0,1)(1.85,1.85)(2.95,1.25)(0.0,1)
(-0.3,2)(1,2.5)(1.85,1.85)
\psline[linestyle=solid,linecolor=blue,border=1pt]{*->}(2.5,3.25)(-0.5,2.50)
\psline[linestyle=solid,linecolor=blue,border=1pt]{*->}(2.5,3.25)(2.3,2.15)
\psline[linestyle=solid,linecolor=blue,border=1pt]{*->}(2.5,-0.5)(1.3,1.12)
\rput(2.5,3.45){\footnotesize $\boldsymbol{\partial (\sh E)}$}
\rput(-1.0,2.75){\footnotesize $\boldsymbol{K}$}
\rput(0.3,2.0){\footnotesize $\boldsymbol{\blacktriangle E_1}$}
\rput(1.0,2.0){\footnotesize $\boldsymbol{\blacktriangle E_2}$}
\rput(1.8,1.5){\footnotesize $\boldsymbol{\blacktriangle E_3}$}
\rput(-0.2,0.8){\footnotesize $\boldsymbol{v_0}$}
\rput(2.5,-0.7){\footnotesize $\boldsymbol{sh E}$}
\end{pspicture}}\hfil
\subfigure[Shape $\sh E' = \blacktriangle E'_1\cup \left\{\blacktriangle E'_2\cup \blacktriangle E'_3\right\}$ (3 filled triangles with a common vertex $v'_0$)\ \&\ shape boundary region $\partial (\sh E')$]
 {\label{fig:shEBoundary2}
\begin{pspicture}
(-1.5,-0.8)(4.0,3.5)
\psframe*[linewidth=0.75pt,linearc=0.25,cornersize=absolute,linestyle=solid,linecolor=orange!20](-1.25,-0.25)(3.25,3)
\psframe[linewidth=0.75pt,linearc=0.25,cornersize=absolute,linestyle=solid](-1.25,-0.25)(3.25,3)
\psline*[linecolor = gray!20]%
(0.0,1)(1.85,1.85)(2.95,1.25)(0.0,1)
(-0.8,2)(1,2.5)(1.85,1.85)(0.0,1)
\psline[linecolor = black]%
(0.0,1)(1.85,1.85)(0.0,1)(2.95,1.25)(0.0,1)
(-0.8,2)(1,2.5)(1.85,1.85)(0.0,1)
\psline[linecolor = black]%
(0.0,1)(1,2.5)
\psline[linecolor = black]%
(1.85,1.85)(2.95,1.25)
\psdots[dotstyle=o, dotsize=1.3pt 2.25,linecolor = black, fillcolor = yellow]%
(0.0,1)(1.85,1.85)(2.95,1.25)(0.0,1)
(-0.8,2)(1,2.5)(1.85,1.85)
\psline[linestyle=solid,linecolor=blue,border=1pt]{*->}(2.5,3.25)(-0.5,2.50)
\psline[linestyle=solid,linecolor=blue,border=1pt]{*->}(2.5,3.25)(2.3,2.15)
\psline[linestyle=solid,linecolor=blue,border=1pt]{*->}(2.5,-0.5)(1.3,1.12)
\rput(2.5,3.45){\footnotesize $\boldsymbol{\partial (\sh E')}$}
\rput(-1.0,2.75){\footnotesize $\boldsymbol{K'}$}
\rput(0.3,2.0){\footnotesize $\boldsymbol{\blacktriangle E'_1}$}
\rput(1.0,2.0){\footnotesize $\boldsymbol{\blacktriangle E'_2}$}
\rput(1.8,1.5){\footnotesize $\boldsymbol{\blacktriangle E'_3}$}
\rput(-0.2,0.8){\footnotesize $\boldsymbol{v'_0}$}
\rput(2.5,-0.7){\footnotesize $\boldsymbol{sh E'}$}
\end{pspicture}}\hfil
\caption[]{Sample shapes and shape boundary regions}
\label{fig:boundaryRegions2}
\end{figure}

\section{Preliminaries}
The simplest form of proximity relation (denoted by $\delta$) on a nonempty set was introduced by E. \v{C}ech~\cite{Cech1966}.  A nonempty set $X$ equipped with the relation $\near$ is a \v{C}ech proximity space (denoted by $(X,\near$)), provided the following axioms are satisfied.\\
\vspace{3mm}

\noindent {\bf \v{C}ech Axioms}

\begin{description}
\item[({\bf P}.0)] All nonempty subsets in $X$ are far from the empty set, {\em i.e.}, $A\ \not{\near}\ \emptyset$ for all $A\subseteq X$.
\item[({\bf P}.1)] $A\ \near\ B \Rightarrow B\ \near\ A$.
\item[({\bf P}.2)] $A\ \cap\ B\neq \emptyset \Rightarrow A\ \near\ B$.
\item[({\bf P}.3)] $A\ \near\ \left(B\cup C\right) \Rightarrow A\ \near\ B$ or $A\ \near\ C$.
\end{description} 

\noindent A filled triangle is a triangle with a nonvoid interior.  Sample filled triangles are shown in Figure~\ref{fig:boundaryRegions2}.

\begin{example}{\bf Proximal Filled Triangles}.\\
Let $(K,\near)$ be a \v{C}ech proximity space, represented in Figure~\ref{fig:boundaryRegions2}.  The space $K$ contains a shape $\sh E$ constructed from three filled triangles $\blacktriangle E_1,\blacktriangle E_2,\blacktriangle E_3$.  We have
\begin{align*}
\blacktriangle E_1\ &\cap\ \left\{\blacktriangle E_2\cup \blacktriangle E_3\right\} \neq \emptyset.\ \mbox{Hence,}\\
\blacktriangle E_1\ &\near\ \left\{\blacktriangle E_2\cup \blacktriangle E_3\right\}\ \mbox{(from Axiom {\bf P.2})}.
\end{align*}
Many other examples of \v{C}ech proximities can be found in shape $\sh E$.
\textcolor{blue}{\Squaresteel}
\end{example}

 Given that a nonempty set $E$ has $k \geq 1$ features such as Fermi energy $E_{Fe}$, cardinality $E_{card}$, a description $\Phi(E)$ of $E$ is a feature vector, {\em i.e.}, $\Phi(E) = \left(E_{Fe},E_{card}\right)$. Nonempty sets $A,B$ with overlapping descriptions are descriptively proximal (denoted by $A\ \dnear\ B$). 
  
\begin{definition}\label{def:dcap}{\rm \bf Descriptive Intersection}.\\	
There are two cases to consider.
\begin{compactenum}[1$^o$]	
\item Let $A,B\in 2^X$, nonempty subsets in a collection of subsets in a space $X$. The descriptive intersection\cite{DiConcilio2018MCSdescriptiveProximities} of nonempty subsets in $A\cup B$ (denoted by $A\ \dcap\ B$) is defined by
\[
A\ \dcap\ B = \overbrace{\left\{x\in A\cup B: \Phi(x) \in \Phi(A)\ \cap\ \Phi(B)\right\}.}^{\mbox{\textcolor{blue}{\bf {\em i.e.}, $\boldsymbol{\mbox{Descriptions}\ \Phi(A)\ \&\ \Phi(B)\ \mbox{overlap}}$ and possibly $A\cap B \neq \emptyset$.}}}
\]
\item Let $A\in 2^X, B\in 2^{X'}$, nonempty subsets in collections of subsets in distinct spaces $X, X'$.  In that case, $A\cap B = \emptyset$ and $A\ \dcap\ B$ is defined by
\[
A\ \dcap\ B = \overbrace{\left\{x\in A\ \mbox{or}\ x\in B: \Phi(x) \in \Phi(A)\ \cap\ \Phi(B)\right\}.}^{\mbox{\textcolor{blue}{\bf {\em i.e.}, $\boldsymbol{\mbox{Descriptions}\ \Phi(A)\ \&\ \Phi(B)\ \mbox{overlap}}$ but always $A\cap B = \emptyset$.}}}
\]
\end{compactenum}
In the context of dpc maps $f:(X,\dnear)\to (X,\dnear)$ or $f:(X,\dnear)\to (X',\dnear)$, the presence of a nonempty $f(A)\ \dcap\ A$ yields amiable fixed sets (cf. Def.~\ref{def:desProxCont}). \textcolor{blue}{\Squaresteel}
\end{definition}

\begin{remark}
From Def.~\ref{def:dcap}, amiable fixed sets can be subsets in dpc maps from a proximity space to itself or from dpc maps from one proximity space $X$ to a different proximity space $X'$.  The model for descriptive intersection for the second form of dpc is the more natural one in the physical world, where it is common that to find objects in different spaces ({\em e.g.}, distinct spaces separated by time and place) that have matching descriptions.
\textcolor{blue}{\Squaresteel}
\end{remark}

Let $2^X$ denote the collection of all subsets in a nonvoid set $X$. A nonempty set $X$ equipped with the relation $\dnear$ with non-void subsets $A,B,C\in 2^X$ is a descriptive proximity space, provided the following descriptive forms of the \v{C}ech axioms are satisfied.\\
\vspace{3mm}

\noindent {\bf Descriptive \v{C}ech Axioms}

\begin{description}
\item[({\bf dP}.0)] All nonempty subsets in $2^X$ are descriptively far from the empty set, {\em i.e.}, $A \not{\dnear}\ \emptyset$ for all $A\in 2^X$.
\item[({\bf dP}.1)] $A\ \dnear\ B \Rightarrow B\ \dnear\ A$.
\item[({\bf dP}.2)] $A\ \dcap\ B\neq \emptyset \Rightarrow A\ \dnear\ B$.
\item[({\bf dP}.3)] $A\ \dnear\ \left(B\cup C\right) \Rightarrow A\ \dnear\ B$ or $A\ \dnear\ C$.
\end{description} 

\noindent The converse of Axiom ({\bf dp}.2) also holds.
\begin{lemma}\label{lemma:dP2converse}{\rm \cite{Peters2019vortexNerves}}
Let $X$ be equipped with the relation $\dnear$, $A,B\in 2^X$.   Then $A\ \dnear\ B$ implies $A\ \dcap\ B\neq \emptyset$.
\end{lemma}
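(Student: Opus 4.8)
The plan is to unwind the definition of the descriptive proximity relation $\dnear$ and reduce the claim to an elementary statement about feature vectors. Recall the description-based reading of closeness stated just before Definition~\ref{def:dcap}: two nonempty sets are descriptively proximal exactly when their descriptions overlap. That is, $A\ \dnear\ B$ holds precisely when $\Phi(A)\cap\Phi(B)\neq\emptyset$, where $\Phi(A) = \left\{\Phi(a):a\in A\right\}\subseteq\mathbb{R}^n$. So the hypothesis $A\ \dnear\ B$ immediately supplies a common feature vector $v\in\Phi(A)\cap\Phi(B)$.

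First I would fix such a $v\in\Phi(A)\cap\Phi(B)$. Since $v\in\Phi(A)$ and $\Phi(A)$ is by definition the image $\left\{\Phi(a):a\in A\right\}$, there must be at least one point $a\in A$ with $\Phi(a)=v$; this witness $a$ is the candidate element of the descriptive intersection. Next I would verify membership using Definition~\ref{def:dcap}: indeed $a\in A\subseteq A\cup B$, and $\Phi(a)=v\in\Phi(A)\cap\Phi(B)$, so $a$ satisfies both defining conditions of the set $\left\{x\in A\cup B:\Phi(x)\in\Phi(A)\cap\Phi(B)\right\}$. The same one-line argument applies verbatim in the second case of Definition~\ref{def:dcap}, where $A$ and $B$ lie in distinct spaces and $a$ is drawn from whichever of the two sets realizes $v$. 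Hence $a\in A\ \dcap\ B$, and therefore $A\ \dcap\ B\neq\emptyset$, as required.

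The only real subtlety, and the step I would be most careful about, is the first one: identifying $A\ \dnear\ B$ with the overlap condition $\Phi(A)\cap\Phi(B)\neq\emptyset$. For an \emph{arbitrary} relation merely satisfying the descriptive \v{C}ech axioms ({\bf dP}.0)--({\bf dP}.3), the converse of ({\bf dP}.2) would not follow by formal manipulation of the axioms alone; the lemma holds because the relevant $\dnear$ is the canonical descriptive proximity generated by the probe $\Phi$, for which closeness is by construction the overlap of descriptions. Once that definitional point is granted, the conclusion is just a direct reading-off of Definition~\ref{def:dcap}, so I do not expect any genuine computational difficulty beyond fixing notation for the two cases.
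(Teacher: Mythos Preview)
Your proof is correct and follows essentially the same route as the paper's: unwind $A\ \dnear\ B$ as the existence of elements with matching descriptions, then check that such an element lands in $A\ \dcap\ B$ by Definition~\ref{def:dcap}. Your explicit caveat that this relies on $\dnear$ being the canonical description-overlap proximity (rather than an arbitrary relation satisfying the axioms) is exactly what the paper compresses into the phrase ``by definition.''
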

\begin{proof}
Let $A,B\in 2^X$. By definition, $A\ \dnear\ B$ implies that there is at least one member $x\in A$ and $y\in B$ so that $\Phi(x) = \Phi(y)$, {\em i.e.}, $x$ and $y$ have the same description.  Then $x,y\in A\ \dcap\ B$.
Hence, $A\ \dcap\ B\neq \emptyset$, which is the converse of ({\bf dp}.2).    
\end{proof}

The Betti number $\beta_0$~\cite{Zomorodian2001BettiNumbers} is a count of the number of cells in a cell complex.  Here,  $\beta_0$ is a count of the number of filled triangles in a shape.  

\begin{example}\label{ex:desProxShapes}{\bf Descriptively Proximal Shapes}.\\
Let $(K,\dnear),(K',\dnear)$ be a pair of descriptive proximity spaces, represented in Fig.~\ref{fig:boundaryRegions2} defined in terms of the same descriptive proximity $\dnear$ with
\[
\Phi(E) = \beta_0(E)\ \mbox{for $E\subset K$}.
\]
For a pair of shapes in two different spaces $K, K'$, we have $\sh E\ \dcap\ \sh E'\neq \emptyset$, since
\[
\Phi(\sh E) = \beta_0(\sh E) = \Phi(\sh E') = \beta_0(\sh E') = 3.
\]
That is, both shapes have the same description, since both shapes are constructed with 3 filled triangles. 
Hence, from Axiom {\bf dP.2}, $\sh E\ \dnear\ \sh E'$.
\textcolor{blue}{\Squaresteel}
\end{example}

\begin{table}[!ht]\scriptsize 
\caption{Minimal Planar Cell Complexes}
\label{tab:skeleton}
\begin{tabular}{|c|c|c|c|}
    \hline
    Minimal Complex & Type $K_i, i = 0, 1, 2$  & Planar Geometry & Interior\\
    \hline
    \hline
\begin{pspicture}
(0,0)(1,1)
\psdots[dotstyle=o, linewidth=1.2pt,linecolor = black, fillcolor = black]%
(0.5,0.5)	
\end{pspicture}		
& 
$K_0$
&
Vertex
&
nonempty
\\
    \hline
    \hline
\begin{pspicture}
(0,0)(1,1)
\psline[showpoints=true,linestyle=solid,linecolor = black]%
(0.25,0.25)(0.75,0.75)
\psdots[dotstyle=o, dotsize=1.3pt 2.25,linecolor = black, fillcolor = black]%
(0.25,0.25)(0.75,0.75)
\end{pspicture}& 
$K_1$
&
Edge
&
nonempty
\\
\hline
\hline
\begin{pspicture}
(0,0)(1,1)
\psline*[linecolor = green!50]%
(0.0,0.15)(0.85,0.85)(0.95,0.25)
\psline[linecolor = black]%
(0.0,0.15)(0.85,0.85)(0.95,0.25)(0.0,0.15)
\psdots[dotstyle=o, dotsize=1.3pt 2.25,linecolor = black, fillcolor = black]%
(0.0,0.15)(0.85,0.85)(0.95,0.25)
\end{pspicture}& 
$K_2$
&
Filled triangle
&
nonempty
\\
\hline
\end{tabular}
\end{table}

\section{CW Spaces}
In this work, a CW space is a collection of cells in a finite, bounded region of Euclidean space.   A \emph{cell} in the Euclidean plane is either a 0-cell (vertex $K_0$) or 1-cell (edge $K_1$) or 2-cell (filled triangle $K_2$).  A {\bf cell complex} is a collection cells attached to each other by 0- or 1-cells or by having one or more common cells. The planar geometry of minimal cell complexes with nonempty interiors are displayed in Table~\ref{tab:skeleton}. Collections of filled cycles ({\em e.g.}, Fig.~\ref{fig:threeCycles}) and ribbons ({\em e.g.}, Fig.~\ref{fig:boundaryRegions}) are examples of cell complexes.

\begin{figure}[!ht]
\centering
\begin{pspicture}
(-2.5,-1.8)(4,2.5)
\psframe[linewidth=0.75pt,linearc=0.25,cornersize=absolute,linestyle=solid](-0.8,-1.5)(4.8,1.8)
\psline[linestyle=solid]%
(0,0)(1,1)(3,1)(4,0)(3,-1)(1,-1)(0,0)
\psline*[linestyle=solid,linecolor=green!30]%
(0,0)(1,1)(3,1)(3,-1)(1,-1)(0,0)
\psline[linestyle=solid,linecolor=black]%
(1,1)(1.8,0.2)(3,1)(3,-1)(1.8,0.2)(1,-1)
\psline[linewidth=0.95pt,linecolor=red,arrowsize=0pt 5]{<->}(0,0)(1,1)
\psline[linecolor=blue,arrowsize=0pt 5]{<->}(1,1)(3,1)
\psline[linewidth=0.95pt,linecolor=black,arrowsize=0pt 5]{<->}(3,1)(3,-1)
\psline[linewidth=0.95pt,linecolor=black,arrowsize=0pt 5]{<->}(3,1)(4,0)
\psline[linewidth=0.95pt,linecolor=black,arrowsize=0pt 5]{<->}(4,0)(3,-1)
\psline[linecolor=blue,arrowsize=0pt 5]{<->}(3,-1)(1,-1)
\psline[linewidth=0.95pt,linecolor=red,arrowsize=0pt 5]{<->}(1,-1)(0,0)
\psline[linewidth=0.95pt,linecolor=red,arrowsize=0pt 5]{<->}(1,1)(1.8,0.2)
\psline[linewidth=0.95pt,linecolor=red,arrowsize=0pt 5]{<->}(1.8,0.2)(1,-1)
\psline[linecolor=blue,arrowsize=0pt 5]{<->}(3,1)(1.8,0.2)
\psline[linecolor=blue,arrowsize=0pt 5]{<->}(1.8,0.2)(3,-1)
\psdots[dotstyle=o, linewidth=1.2pt,linecolor = black, fillcolor = yellow]%
(0,0)(1,-1)(1,1)(1.8,0.2)(3,1)(3,-1)(1.8,0.2)(4,0)
\psline[linestyle=solid,linecolor=blue,border=1pt]{*->}(0,2.1)(0.5,0.5) 
\rput(0,2.3){$\boldsymbol{cyc E_{min-filled}}$}
\psline[linestyle=solid,linecolor=blue,border=1pt]{*->}(4,2.1)(3.5,0.5) 
\rput(4,2.3){$\boldsymbol{cyc E_{non-filled}}$}
\rput(1,1.25){$\boldsymbol{v_1}$}
\rput(3.0,1.25){$\boldsymbol{v_2}$}
\rput(1,-1.25){$\boldsymbol{v_3}$}
\rput(3.0,-1.25){$\boldsymbol{v_4}$}
\rput(1.8,0.45){$\boldsymbol{v_5}$}
\rput(4.3,0){$\boldsymbol{v_8}$}
\rput(-0.3,-0.0){$\boldsymbol{v_0}$}
\rput(-0.5,1.5){\footnotesize $\boldsymbol{K}$}
\end{pspicture}
\caption[]{Shape $\sh E$ in CW space $K$ is a collection of cycles with vertexes $v_0, v_1, v_3, v_5$ residing in minimal-length filled cycle $\boldsymbol{cyc E_{min-filled}}$\ \&\ vertexes $v_2, v_4, v_8$ in non-filled cycle $\boldsymbol{cyc E_{non-filled}}$.}
\label{fig:1-cyclePaths}
\end{figure}

\begin{example}
A collection of cell complexes defined by overlapping, filled and non-filled 1-cycles that define a shape $\sh E$ in a CW space $K$ is shown in Fig.~\ref{fig:1-cyclePaths}.  The white areas represent the boundary region of $\sh E$. Let $v_1, v_2, v_3, v_4, v_5, v_8$ be path-connected vertexes on bi-directional edges in a number of different cycles such as
\begin{align*}
\cyc E_{min-filled} &= \overbrace{v_0\to v_1\to v_5\to
					v_3\to v_0\ \mbox{(traversal from $v_0$ to $v_0$)}.}^{\mbox{\textcolor{blue}{\bf sequence of maps in traversal on $\boldsymbol{E_{min-filled}}$}}}\\
E_{non-filled} &= \overbrace{v_2\to v_8\to
          v_4\to v_2}^{\mbox{\textcolor{blue}{\bf sequence of maps in traversal on $\boldsymbol{\cyc E_{non-filled}}$}}}         
\end{align*}

The filled cycles in Fig.~\ref{fig:1-cyclePaths} have nonvoid interiors represented by \textcolor{green!80}{\Large \Squaresteel}. Filled cycle $E_{min-filled}$ has the shortest path that starts and ends with $v_0$.  The path defined by $E_{min-filled}$ is simple (it has no loops) and closed (each complete traversal that starts with a particular vertex ends with the same vertex).   Cycle $\cyc E_{non-filled}$ is non-filled cycle, since its interior is part of the white boundary region.
\qquad \textcolor{blue}{\Squaresteel}
\end{example}

A nonvoid collection of cell complexes $K$ has a \emph{Closure finite Weak} (CW) topology, provided $K$ is Hausdorff (every pair of distinct cells is contained in disjoint neighbourhoods~\cite[\S 5.1, p. 94]{Naimpally2013}) and the collection of cell complexes in $K$ satisfy the Alexandroff-Hopf-Whitehead~\cite[\S III, starting on page 124]{AlexandroffHopf1935Topologie},~\cite[pp. 315-317]{Whitehead1939homotopy}, ~\cite[\S 5, p. 223]{Whitehead1949BAMS-CWtopology} conditions, namely, containment (the closure of each cell complex is in $K$) and intersection (the nonempty intersection of cell complexes is in $K$). 

A number of important results in this paper spring from descriptive proximally continuous maps.  

\begin{definition}\label{def:desProxCont}{\rm~\cite[\S 3, p. 8]{PetersVergili2020descriptiveFixedSets} {\bf Descriptive Fixed Sets and Amiable Fixed Sets.}}\\
	Let $(X,\delta_{\Phi})$ be a descriptive  \v{C}ech proximity space and $f: (X, \delta_{\Phi}) \to (X, \delta_{\Phi})$ a descriptive proximally continuous map (our dpc map) . 
		\begin{compactenum}[(i)]
\item $A$  is  a descriptive fixed subset of $f$, provided  $\Phi(f(A))=\Phi(A)$. 
\item $A$ and $f(A)$ are amiable fixed sets, provided $f(A) \ \dcap \ A \neq \emptyset$. 
\end{compactenum}
\end{definition}

Let $K$ be a CW space.  Briefly, the boundary $\partial A$ of a cell complex $A$ in $K$ is the set of all points not in $A$.

\begin{example}
Let the CW space $K$ be represented by Fig.~\ref{fig:shEBoundary}.  
Let $f: (K, \delta_{\Phi}) \to (K, \delta_{\Phi})$ be a descriptive proximally continuous map defined by 
\[
f(A) = K\setminus \partial A,\ \mbox{for cell complex}\ A\in K.
\]
For shape $\sh E$ in Fig.~\ref{fig:shEBoundary}, $f(\sh E)= K\setminus \partial \sh E = \sh E.$  From Example~\ref{ex:desProxShapes}, $\Phi(f(\sh E))=\Phi(\sh E)$.  Hence, $\sh E$  is  a descriptive fixed subset of $f$.\\
Also, from Example~\ref{ex:desProxShapes}, $f(\sh E)$ and shape $\sh E$ are amiable, since $f(\sh E) \ \dcap \ \sh E \neq \emptyset$.
\textcolor{blue}{\Squaresteel}
\end{example} 

\begin{figure}[!ht]
\centering
\subfigure[Cycle with boundary $\bdy E$\ \&\ interior $\Int E$\ \&\ cyclic group generator vertex $v_0$]
 {\label{fig:cycE}
\begin{pspicture}
(-1.5,-0.5)(4.0,3.5)
\psframe[linewidth=0.75pt,linearc=0.25,cornersize=absolute,linestyle=solid](-1.25,-0.45)(3.25,3)
\psframe[linewidth=0.75pt,linearc=0.25,cornersize=absolute,linestyle=solid](-1.25,-0.45)(3.25,3)
\psline*[linestyle=solid,linecolor=green!30]%
(0,0)(1,0.5)(2.0,0.0)(3.0,0.5)(3.0,1.5)(2.0,2.0)(1,1.5)(0,2)
(-1,1.5)(-1,0.5)(0,0)
\psline[linestyle=solid,linecolor=black]%
(0,0)(1,0.5)(2.0,0.0)(3.0,0.5)(3.0,1.5)(2.0,2.0)(1,1.5)(0,2)
(-1,1.5)(-1,0.5)(0,0)
\psdots[dotstyle=o,dotsize=2.2pt,linewidth=1.2pt,linecolor=black,fillcolor=yellow!80]%
(0,0)(1,0.5)(2.0,0.0)(3.0,0.5)(3.0,1.5)(2.0,2.0)(1,1.5)(0,2)
(-1,1.5)(-1,0.5)(0,0)
\psline[linestyle=solid,linecolor=blue,border=1pt]{*->}(2.5,3.25)(2.2,1.90) 
\psline[linestyle=solid,linecolor=blue,border=1pt]{*->}(1.0,3.15)(1.8,0.90) 
\rput(2.5,3.45){\footnotesize $\boldsymbol{\bdy(\cyc E)}$}
\rput(1.0,3.35){\footnotesize $\boldsymbol{\Int(\cyc E)}$}
\rput(-1.0,2.75){\footnotesize $\boldsymbol{K}$}
\rput(0.0,2.20){\footnotesize $\boldsymbol{\cyc E}$}
\rput(1.0,1.70){\footnotesize $\boldsymbol{v_0}$}
\rput(0.0,-0.2){\footnotesize $\boldsymbol{v}$}
\end{pspicture}}\hfil
\subfigure[Intersecting cycles $\cyc A, \cyc B$ with $\cyc A\cap \cyc B = v$\ \&\ overlapping interiors, {\em i.e}, $\Int(\cyc B)\subset \Int(\cyc A)$\ \&\ free finitely-generated Abelian group with generator vertices $v_0,v_{o'}$]
 {\label{fig:intersectingCycles}
\begin{pspicture}
(-1.5,-0.5)(4.0,3.0)
\psframe[linewidth=0.75pt,linearc=0.25,cornersize=absolute,linestyle=solid](-1.25,-0.45)(3.25,3)
\psframe[linewidth=0.75pt,linearc=0.25,cornersize=absolute,linestyle=solid](-1.25,-0.45)(3.25,3)
\psline*[linestyle=solid,linecolor=green!30]%
(0,0)(1,0.5)(2.0,0.0)(3.0,0.5)(3.0,1.5)(2.0,2.0)(1,1.5)(0,2)
(-1,1.5)(-1,0.5)(0,0)
\psline[linestyle=solid,linecolor=black]%
(0,0)(1,0.5)(2.0,0.0)(3.0,0.5)(3.0,1.5)(2.0,2.0)(1,1.5)(0,2)
(-1,1.5)(-1,0.5)(0,0)
\psdots[dotstyle=o,dotsize=2.2pt,linewidth=1.2pt,linecolor=black,fillcolor=yellow!80]%
(0,0)(1,0.5)(2.0,0.0)(3.0,0.5)(3.0,1.5)(2.0,2.0)(1,1.5)(0,2)
(-1,1.5)(-1,0.5)(0,0)
\psline*[linestyle=solid,linecolor=green!30]%
(0,0.0)(1,0.75)(2.0,0.25)(2.5,0.5)(2.5,0.75)(2.0,1.35)(1,1.25)(0,1.5)
(-.55,1.25)(-.55,0.75)(0,0.0)
\psline[linestyle=solid,linecolor=black]%
(0,0.0)(1,0.75)(2.0,0.25)(2.5,0.5)(2.5,0.75)(2.0,1.35)(1,1.25)(0,1.5)
(-.55,1.25)(-.55,0.75)(0,0.0)
\psdots[dotstyle=o,dotsize=2.2pt,linewidth=1.2pt,linecolor=black,fillcolor=yellow!80]%
(0,0.0)(1,0.75)(2.0,0.25)(2.5,0.5)(2.5,0.75)(2.0,1.35)(1,1.25)(0,1.5)
(-.55,1.25)(-.55,0.75)(0,0.0)
\psdots[dotstyle=o,dotsize=2.2pt,linewidth=1.2pt,linecolor=black,fillcolor=red]%
(0,0.0)
\psline[linestyle=solid,linecolor=blue,border=1pt]{*->}(2.5,3.25)(2.2,1.90) 
\psline[linestyle=solid,linecolor=blue,border=1pt]{*->}(-0.5,3.25)(-0.2,1.45) 
\psline[linestyle=solid,linecolor=blue,border=1pt]{*->}(1.0,3.15)(1.8,0.90) 
\psline[linestyle=solid,linecolor=blue,border=1pt]{*->}(1.0,3.15)(1.8,1.60) 
\psline[linestyle=solid,linecolor=blue,border=1pt]{*->}(0.3,3.55)(0.35,0.70) 
\rput(2.5,3.45){\footnotesize $\boldsymbol{\bdy(\cyc A)}$}
\rput(-0.5,3.45){\footnotesize $\boldsymbol{\bdy(\cyc B)}$}
\rput(1.0,3.35){\footnotesize $\boldsymbol{\Int(\cyc A)}$}
\rput(0.5,3.75){\footnotesize $\boldsymbol{\Int(\cyc B)}$}
\rput(-1.0,2.75){\footnotesize $\boldsymbol{K}$}
\rput(1.0,1.70){\footnotesize $\boldsymbol{v_0}$}
\rput(1.0,0.90){\footnotesize $\boldsymbol{v'_{0}}$}
\rput(0.0,-0.2){\footnotesize $\boldsymbol{v}$}
\rput(2.8,1.85){\footnotesize $\boldsymbol{cyc A}$}
\rput(2.5,1.25){\footnotesize $\boldsymbol{cyc B}$}
\end{pspicture}}\hfil
\caption[]{Sample filled cycles $\cyc E$ and $\cyc A, \cyc B$}
\label{fig:threeCycles}
\end{figure}

\section{1-Cycles, Cyclic group representations and Betti numbers}
This section revisits filled 1-cycles (briefly, cycles), which are fundamental structures in a CW space $K$.  

\begin{definition}\label{def:filledCycle} {\rm \bf Filled Cycle}.\\
A filled cycle $\cyc E$ in a CW space $K$ is defined by
\begin{align*}
v,v_0 &\in \bdy E\ \mbox{(contains vertexes $v$\ \&\ generator vertex $v_0$)}.\\
\bdy(\cyc E) &= \overbrace{\left\{v\in K: \exists v_0, v = kv_0\right\}.}^{\mbox{\textcolor{blue}{\bf \mbox{each} $\boldsymbol{v}$\ \mbox{can be reached by}\ $\boldsymbol{k}$\ \mbox{moves from $v_0$}}}}\\
       &\overbrace{\mbox{collection of path-connected vertexes with no end vertex.}}^{\mbox{\textcolor{blue}{\bf defines a simple closed curve}}}\\
\Int(\cyc E) &= \ \mbox{nonvoid interior of $\boldsymbol{\cyc E}$, such that}\\
\cyc E &= \bdy(\cyc E) \cup \Int(\cyc E).
\end{align*}
In other words, a filled cycle is a bounded region with a simple, closed curve defined by a collection of path-connected vertexes with no end vertex and a nonvoid interior within its boundary.
\textcolor{blue}{\Squaresteel}
\end{definition}


\begin{example}\label{ex:filledCycle}
A filled cycle $\cyc E$ with boundary $\bdy(\cyc E)$\ and interior $\Int(\cyc E)$\ containing a cyclic group generator vertex $v_0$  is shown in Fig.~\ref{fig:cycE}.  For each vertex $v\in \bdy(\cyc E)$, there is a minimal sequence of $k$ moves in a traversal of the edges between $v_0$ and $v$.  For this reason, we write
\[
v = \overbrace{kv_0.}^{\mbox{\textcolor{blue}{\bf $k$ moves to reach $v$ from $v_0$}}}\mbox{\textcolor{blue}{\Squaresteel}}
\]
\end{example}

\begin{theorem}\label{thm:filledCycle}
The closure of a planar shape is a filled 1-cycle in a CW space.
\end{theorem}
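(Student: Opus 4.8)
The plan is to verify directly that $\cl(\sh E)$ satisfies each clause of Definition~\ref{def:filledCycle}. From the opening Remark we already possess the decomposition $\cl(\sh E) = \bdy(\sh E)\cup \Int(\sh E)$, so the task reduces to checking that the contour $\bdy(\sh E)$ plays the role of $\bdy(\cyc E)$ and that $\Int(\sh E)$ supplies the required nonvoid interior. The desired conclusion would then follow immediately from the union decomposition, which matches the closing identity $\cyc E = \bdy(\cyc E)\cup \Int(\cyc E)$ of the filled-cycle definition.

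First I would treat the boundary. By the Remark, the contour $\bdy(\sh E)$ of a planar shape is a simple, closed curve, and every such boundary of a simplicial complex is a cycle. In the CW setting this cycle is realized as a finite collection of $0$-cells joined by $1$-cells; I would fix a generator vertex $v_0$ on the contour and argue, exactly as in Example~\ref{ex:filledCycle}, that every other boundary vertex $v$ is reachable from $v_0$ by a minimal sequence of $k$ edge-traversals, so that $v = kv_0$. Because the curve is closed, the traversal beginning at $v_0$ returns to $v_0$ with no end vertex, which is precisely the path-connected, end-vertex-free condition required of $\bdy(\cyc E)$.

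Next I would dispatch the interior: since $\sh E$ is a filled planar shape, $\Int(\sh E)$ is nonvoid, matching the stipulation that $\Int(\cyc E)$ be a nonvoid interior lying within the bounding curve. Invoking the CW containment condition then guarantees that $\cl(\sh E)$ is itself a cell complex in the space $K$, so the construction stays inside the CW space. Assembling the two parts, $\cl(\sh E)=\bdy(\sh E)\cup\Int(\sh E)$ meets every requirement of Definition~\ref{def:filledCycle} and is therefore a filled $1$-cycle.

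The step I expect to be the main obstacle is the boundary parametrization, namely showing that the discrete contour carries the cyclic-group structure $v = kv_0$ generated by a single vertex with no end vertex. The geometric content (a simple closed curve) is transparent, but translating it into the CW language of path-connected vertices requires care that the chosen generator indexes the entire contour consistently and that the closedness of the curve forces the traversal to return to $v_0$. Once this indexing is established, the remainder of the argument is routine bookkeeping against the filled-cycle definition.
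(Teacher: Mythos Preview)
Your proposal is correct and follows essentially the same route as the paper: both arguments use the decomposition $\cl(\sh E)=\bdy(\sh E)\cup\Int(\sh E)$, observe that the contour is a simple closed curve realizable as a collection of path-connected vertices with no end vertex, and then invoke Definition~\ref{def:filledCycle}. The paper's proof is terser (it cites an external lemma for the decomposition rather than the opening Remark, and omits the explicit $v=kv_0$ bookkeeping you include), but the strategy is the same.
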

\begin{proof}
Let $\cl(\sh E)$ be the closure of a shape $\sh E$, which is a cell complex in CW space.  From Lemma 4.15~\cite{Peters2020CGTPhysics}, we have
\[
(\alpha)\qquad cl(\sh E) = \bdy(\sh E) \cup \Int(\sh E).
\]
The boundary of a planar shape is a simple, closed curve that can be decomposed into a collection of path-connected vertexes with no end vertex.  Consequently, a shape boundary is the boundary of a filled 1-cycle.
Hence, $\cl(\sh E)$ is a filled 1-cycle (from ($\alpha$) and Def.~\ref{def:filledCycle}).
\end{proof}

\begin{remark}
An application of filled cycles viewed as planar models of wide ribbons can be found in tracking the determination of final loci of fluid circulation~\cite[\S 5.5]{Zhang-Li2020fluidFlow}.  A slice of a moving fluid such as water in a tube or in an ocean wave approaching the slope of a shoreline is a collection of nested, usually non-concentric filled cycles.
By considering an expander graph approach to nested filled cycles such as Vigolo wide ribbon expanders~\cite{Vigolo2018HawaiianEarrings},
we obtain a formal method useful in the study of fluid flow as well as the interaction of a moving fluid along its boundaries.
\textcolor{blue}{\Squaresteel}
\end{remark}

Recall that a cyclic group is a nonempty set $G$ with a binary operation $+$ and element $g\in G$ (called a generator, denoted by $\langle g\rangle$) so that every other element $g'\in G$ can be written as a multiple of $g$, {\em i.e.}, 
\[
g' = \overbrace{g + \cdots + g = kg.}^{\mbox{\textcolor{blue}{\bf $g'$ is a linear combination of generator $g$}}}
\]
We write $G(\langle g\rangle,+)$ to denote a cyclic group with a single generator.

The vertex $v_0$ in Example~\ref{ex:filledCycle} serves as a generator of a cyclic group.  A filled cycle 
Let $0v$ represent zero moves from any vertex $v\in \bdy E$.  Moving $k$ moves backward from vertex $v$ is denoted by $-kv$.  This gives us $kv - kv = (k-k)v = 0v$. For $k = 1$, $v - v = (1-1)v = 0v$ ({\em i.e.}, $-v$ is the additive (move) inverse of $v$.  Let $v,v'\in \bdy E$ and let $v + v'$ represent a move from vertex $v$ to vertex $v'$.  Then we have
\begin{align*}
k,k' &> 0, k\neq k'\\
v + v' &= kv_0 + k'v_0\\
       &= (k + k')v_0\\
			 &= (k' + k)v_0\\
			 &= k'v_0 + kv_0\\
			 &= v' + v.
\end{align*} 
Hence, the move operation + is Abelian and $G(\langle v_0\rangle,+)$ is a cyclic group that is Abelian.  This gives us the result in Lemma~\ref{lem:cyclicGroup}.

\begin{lemma}\label{lem:cyclicGroup}
The boundary of a filled cycle has an Abelian cyclic group representation.
\end{lemma}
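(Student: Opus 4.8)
The plan is to present the vertex set $\bdy(\cyc E)$ together with the move operation $+$ as a concrete cyclic group and then verify commutativity, so that all three claims — group, cyclic, Abelian — are discharged at once. The starting point is the representation established in Def.~\ref{def:filledCycle} and Example~\ref{ex:filledCycle}: every vertex $v\in\bdy(\cyc E)$ is reached from the distinguished generator vertex $v_0$ by a minimal number $k$ of edge traversals, written $v = kv_0$. First I would fix this generator $v_0$ and take the carrier of the group to be $\bdy(\cyc E)$ with $+$ the composition of moves, so that $v_0$ is a candidate generator in the sense of the notation $G(\langle g\rangle,+)$ recalled above.

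Next I would verify the group axioms directly against the move interpretation. The identity is $0v_0$ (zero moves), since $kv_0 + 0v_0 = kv_0$; the inverse of $v = kv_0$ is $-kv_0$, the same number of moves taken in the reverse direction along the closed curve, giving $kv_0 + (-kv_0) = (k-k)v_0 = 0v_0$ as noted in the text preceding the lemma. Associativity is inherited from addition of the integer move-counts, since $(k v_0 + k' v_0) + k'' v_0 = (k+k'+k'')v_0 = k v_0 + (k' v_0 + k'' v_0)$. Cyclicity is then immediate: by Def.~\ref{def:filledCycle} \emph{every} boundary vertex has the form $kv_0$, so $v_0$ generates the whole of $\bdy(\cyc E)$. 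The Abelian property is exactly the displayed computation $v + v' = (k+k')v_0 = (k'+k)v_0 = v' + v$ already carried out, which reduces commutativity of moves to commutativity of integer addition. Assembling these facts yields $G(\langle v_0\rangle,+)$ as an Abelian cyclic group whose elements are precisely the vertices of $\bdy(\cyc E)$.

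The hard part will be closure, and it is the one place where the \emph{simple closed curve} hypothesis in Def.~\ref{def:filledCycle} genuinely does work. Because the curve is closed with, say, $n$ boundary vertices, a move of $k$ steps and a move of $k'$ steps must be read modulo $n$: the composite $kv_0 + k'v_0$ lands on the boundary vertex reached by $(k+k')\bmod n$ moves from $v_0$, so the operation stays inside $\bdy(\cyc E)$ and the representation $v = kv_0$ is only well-defined up to multiples of $n$. I would therefore make explicit that the move-counts are taken modulo $n$, identifying $(\bdy(\cyc E),+)$ with $\mathbb{Z}/n\mathbb{Z}$; this both secures closure and makes the inverse $-kv_0$ unambiguous as the complementary $(n-k)$-move around the cycle. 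Once this identification is in place, the remaining axioms transfer verbatim from $\mathbb{Z}/n\mathbb{Z}$, and the lemma follows.
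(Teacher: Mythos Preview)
Your proof is correct and follows essentially the same approach as the paper: the paper's argument (given in the paragraph immediately preceding the lemma) likewise fixes $v_0$ as generator, introduces $0v$ as identity and $-kv$ as inverse, and verifies commutativity via the identical computation $kv_0 + k'v_0 = (k+k')v_0 = (k'+k)v_0 = k'v_0 + kv_0$. Your treatment is in fact more thorough, since you explicitly address associativity and closure (via the identification with $\mathbb{Z}/n\mathbb{Z}$), points the paper leaves implicit.
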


\begin{definition}\label{def:freeAbelianGroup} ({\bf Free Finitely-Generated Abelian Group and its Betti Number})
A \emph{free finitely-generated (fg) Abelian} group $G$ with binary operation $+$ is a group containing $n$ cyclic subgroups (each with its own generator)~\cite[\S A.9, pp. 217-218]{Goblin2010CUPhomology}, denoted by $G\left(\langle g_1,\dots,g_n\rangle,+\right)$. An Abelian group $G$ is \emph{free}\cite[\S 1.2, p. 2]{Vick1994freeGroup}, provided every member $v$ of $G$ generator $v_0$ has a unique representation of the form
\[
v = \mathop{\sum}\limits_{v_0\in G}kv_0 = \overbrace{v_0+\cdots +v_0.}^{\mbox{\textcolor{blue}{\bf $k$ moves to reach $v$ from $v_0$}}}
\]
  The Betti number for $G$ (denoted by $\beta_{\alpha}(G)$) is a count of the number of generators in $G$ (its rank)~\cite[\S 4, p. 24]{Munkres1984}.
\textcolor{blue}{\Squaresteel}
\end{definition} 

\begin{example}\label{ex:intersectingCycles} {\bf Free fg Abelian group representation for path-connected cycles}.\\
Intersecting filled cycles $\cyc A, \cyc B$ with boundaries $\bdy A, \bdy B$, interiors $\Int A,\Int B$ define shape $\sh E$, and generators $v_0,v_{0'}$, respectively, is shown in Fig.~\ref{fig:intersectingCycles}.  For each vertex $v\in \bdy(\cyc A)$, there is a minimal sequence of $k$ moves in a traversal of the edges between $v_0$ and $v$.  For this reason, we write
\[
v = \overbrace{kv_0.}^{\mbox{\textcolor{blue}{\bf $k$ moves to reach $v$ from $v_0$}}}
\]
From Def.~\ref{def:freeAbelianGroup}, we have
\begin{description}
\item [{\bf Free fg Abelian group representation}] $G\left(\langle v_0,v'_0\rangle,+\right)$. The \emph{free} property of $G$ stems from the choice of minimal path to reach a vertex $v$ from a generator $v_0$.
\item [{\bf Betti number}] $\beta_{\alpha}(\sh E) = 2$. \mbox{\textcolor{blue}{\Squaresteel}}
\end{description}
\end{example}

\begin{example}\label{ex:filledCycles}
A filled cycle $\cyc E$ with boundary $\bdy E$\ and interior $\Int E$\ containing a cyclic group generator vertex $v_0$  is shown in Fig.~\ref{fig:cycE}.  For each vertex $v\in \bdy E$, there is a minimal sequence of $k$ moves in a traversal of the edges between $v_0$ and $v$.  For this reason, we write
\[
v = \overbrace{kv_0.}^{\mbox{\textcolor{blue}{\bf $k$ moves to reach $v$ from $v_0$}}}
\]
From Def.~\ref{def:freeAbelianGroup}, we have
\begin{description}
\item [{\bf Free fg Abelian group representation}] $\cyc E\left(\langle v_0\rangle,+\right)$. 
\item [{\bf Betti number}] $\beta_{\alpha}(\cyc E) = 1$. \mbox{\textcolor{blue}{\Squaresteel}}
\end{description}
\end{example}


\begin{figure}[!ht]
\centering
\subfigure[Shape $\sh E$ boundary $\partial (\sh E)$]
 {\label{fig:shE}
\begin{pspicture}
(-1.5,-0.5)(4.0,3.0)
\psframe*[linewidth=0.75pt,linearc=0.25,cornersize=absolute,linestyle=solid,linecolor=orange!20](-1.25,-0.25)(3.25,3)
\psframe[linewidth=0.75pt,linearc=0.25,cornersize=absolute,linestyle=solid](-1.25,-0.25)(3.25,3)
\psline*[linestyle=solid,linecolor=green!30]%
(0,0)(1,0.5)(2.0,0.0)(3.0,0.5)(3.0,1.5)(2.0,2.0)(1,1.5)(0,2)
(-1,1.5)(-1,0.5)(0,0)
\psline[linestyle=solid,linecolor=black]%
(0,0)(1,0.5)(2.0,0.0)(3.0,0.5)(3.0,1.5)(2.0,2.0)(1,1.5)(0,2)
(-1,1.5)(-1,0.5)(0,0)
\psdots[dotstyle=o,dotsize=2.2pt,linewidth=1.2pt,linecolor=black,fillcolor=yellow!80]%
(0,0)(1,0.5)(2.0,0.0)(3.0,0.5)(3.0,1.5)(2.0,2.0)(1,1.5)(0,2)
(-1,1.5)(-1,0.5)(0,0)
\psline[linestyle=solid,linecolor=blue,border=1pt]{*->}(2.5,3.25)(1.0,2.50)
\rput(2.5,3.45){\footnotesize $\boldsymbol{\partial (\sh E)}$}
\rput(-1.0,2.75){\footnotesize $\boldsymbol{K}$}
\rput(0.0,1.75){\footnotesize $\boldsymbol{sh E}$}
\end{pspicture}}\hfil
\subfigure[Shape $\rb E$ boundary $\partial (\rb E)$]
 {\label{fig:rbNrvE}
\begin{pspicture}
(-1.5,-0.5)(4.0,3.0)
\psframe*[linewidth=0.75pt,linearc=0.25,cornersize=absolute,linestyle=solid,linecolor=orange!20](-1.25,-0.25)(3.25,3)
\psframe[linewidth=0.75pt,linearc=0.25,cornersize=absolute,linestyle=solid](-1.25,-0.25)(3.25,3)
\psline*[linestyle=solid,linecolor=green!30]%
(0,0)(1,0.5)(2.0,0.0)(3.0,0.5)(3.0,1.5)(2.0,2.0)(1,1.5)(0,2)
(-1,1.5)(-1,0.5)(0,0)
\psline[linestyle=solid,linecolor=black]%
(0,0)(1,0.5)(2.0,0.0)(3.0,0.5)(3.0,1.5)(2.0,2.0)(1,1.5)(0,2)
(-1,1.5)(-1,0.5)(0,0)
\psdots[dotstyle=o,dotsize=2.2pt,linewidth=1.2pt,linecolor=black,fillcolor=yellow!90]%
(0,0)(1,0.5)(2.0,0.0)(3.0,0.5)(3.0,1.5)(2.0,2.0)(1,1.5)(0,2)
(-1,1.5)(-1,0.5)(0,0)
\psline[linestyle=solid](1,1.5)(2,2)\psline[arrows=<->](-1,1.7)(-0.3,2.0)\psline[arrows=<->](0.2,2.05)(0.8,1.75)
\psline*[linestyle=solid,linecolor=orange!20]%
(0,0.25)(1,0.75)(2.0,0.25)(2.5,0.5)(2.5,0.75)(2.0,1.35)(1,1.25)(0,1.5)
(-.55,1.25)(-.55,0.75)(0,0.25)
\psline[linestyle=solid,linecolor=black]%
(0,0.25)(1,0.75)(2.0,0.25)(2.5,0.5)(2.5,0.75)(2.0,1.35)(1,1.25)(0,1.5)
(-.55,1.25)(-.55,0.75)(0,0.25)
\psdots[dotstyle=o,dotsize=2.2pt,linewidth=1.2pt,linecolor=black,fillcolor=yellow!80]%
(0,0.25)(1,0.75)(2.0,0.25)(2.5,0.5)(2.5,0.75)(2.0,1.35)(1,1.25)(0,1.5)
(-.55,1.25)(-.55,0.75)(0,0.25)
\psline[linestyle=solid,linecolor=blue,border=1pt]{*->}(2.5,3.25)(1.0,2.50)
\psline[linestyle=solid,linecolor=blue,border=1pt]{*->}(2.5,3.25)(1.0,1.00)
\rput(2.5,3.45){\footnotesize $\boldsymbol{\partial (\rb E)}$}
\rput(-1.0,2.75){\footnotesize $\boldsymbol{K}$}
\rput(0.0,1.75){\footnotesize $\boldsymbol{rb E}$}
\rput(2.8,1.85){\footnotesize $\boldsymbol{cyc A}$}
\rput(2.5,1.25){\footnotesize $\boldsymbol{cyc B}$}
\end{pspicture}}\hfil
\caption[]{Sample boundary regions $\partial (\sh E)$ and $\partial (\rb E)$}
\label{fig:boundaryRegions}
\end{figure}

\section{Shape boundary and Ribbon boundary}
Let $K$ be a CW space, bounded region $\re E$ in $K$.  Recall that a simple closed curve is a curve that has no self-loops (no self-intersections), has no endpoints and which completely encloses a surface region.  A surface region $\re E$ is defined by a cover. A collection of subsets $\mathcal{A}$ of $K$ is a cover of a bounded region $\re E$ of $K$, provided the union of the elements of $\mathcal{A}\supseteq\re E$~\cite[\S 26, p. 164]{Munkres2000}.
  
	A nonvoid region shape $\re E$ in CW space $K$ is the set of points in $K$ that are either on the bordering edge of $\re E$ (a simple closed curve denoted by $\bdy(\re E)$) or in the interior of $\re E$ (denoted by $\Int(\re E)$).  The {\bf boundary region} of $K$ exterior to $\re E$ (denoted by $\partial \sh E$) is the set of all points in $K$ not included in the closure of $\re E$.  Let $2^K$ be the collection of all subsets of space $K$.  Recall that the {\bf closure} of a nonempty set $E\in 2^K$ (denoted by $\cl E$) is the set of points $x\in K$ such that the Hausdorff distance between $x$ and $E$ (denoted by $D(x,E)$) is zero~\cite[\S 1.3,p.5]{Naimpally2013}, {\em i.e.},
\begin{align*}
\cl(\re E) &= \overbrace{\left\{x\in K: D(x,\re E) = 0\right\}.}^{\mbox{\textcolor{blue}{\bf Closure $\boldsymbol{\cl(\re E)}$ is the set all points of $\boldsymbol{x}$ in region $\boldsymbol{\re E}$.}}}\\
 &= \bdy(\re E)\cup \Int(\re E)
\end{align*}

A CW space $K$ containing a nonvoid region $\cl(\re E)$ is prescribed by
\[
K = \cl(\re E)\cup \partial(\cl(\re E)).
\]

\begin{definition}\label{def:boundary}
For the boundary $\partial \cl(\re E)$ of the closure of a region $\re E$, we have
\[
\partial \cl(\re E) = \overbrace{\left\{x\in K: x\ \not\in\ \cl(
\re E)\right\} = K\setminus \cl(\re E).}^{\mbox{\textcolor{blue}{\bf $\boldsymbol{\partial \cl(\re E)}$ is the set all points in $\boldsymbol{K}$ exterior to $\boldsymbol{\cl(\re E)}$.}}}\mbox{\textcolor{blue}{\Squaresteel}}
\]
\end{definition}

Recall that a {\bf planar shape} is a finite region of the Euclidean plane bounded by a simple closed curve (shape contour) with a nonempty interior (shape content)~\cite[p. ix]{Peters2020CGTPhysics}.

\begin{example}
Let $\sh E$ be a shape in a CW space $K$.  We identify all boundary region points that are not in the closure of shape $\sh E$ (denoted by $\partial(\cl(\sh E))$), {\em e.g.},
\[
\partial (\cl(\sh E)) = \overbrace{K\setminus \cl(\sh E).}^{\mbox{\textcolor{blue}{\bf $\boldsymbol{\partial \cl(\sh E)}$ is the set all points in $\boldsymbol{K}$ not in $\boldsymbol{\cl(\sh E)}$.}}}
\]
In other words, the boundary of a shape contains all points in $K$ that are not in the shape.  For example, in Fig.~\ref{fig:shE}, the orange region \textcolor{orange!50}{\Large \Squaresteel} represents the boundary region $\boldsymbol{\partial \cl(\sh E)}$ containing all points in $K$ that are not in the shape $\sh E$.
\textcolor{blue}{\Squaresteel}
\end{example}

\begin{figure}[!ht]
\centering
\begin{pspicture}
(-1.5,-0.5)(4.0,3.0)

\psframe*[linewidth=0.75pt,linearc=0.25,cornersize=absolute,linestyle=solid,linecolor=orange!20](-1.25,-0.25)(3.5,3)

\psframe[linewidth=0.75pt,linearc=0.25,cornersize=absolute,linestyle=solid](-1.25,-0.25)(3.5,3)

\psline*[linestyle=solid,linecolor=green!30]%
(0,0)(0.5,1)(0,2)(-1,1.5)(-1,0.5)(0,0)(0.5,1)
\psline[linestyle=solid,linecolor=black]%
(0,0.5)(0.5,1)(0,2)(-1,1.5)(-1,0.5)(0,0)(0.5,1) 
\psline*[linestyle=solid,linecolor=white]%
(0,0.5)(0.5,1)(-.55,1.25)(-.55,0.75)(0,0.5)
\psline[linestyle=solid,linecolor=black]%
(0,0.5)(0.5,1)(-.55,1.25)(-.55,0.75)(0,0.5)
\psline[linestyle=solid,linecolor=black]%
(0.5,1)(1.5,1)
\psdots[dotstyle=o,dotsize=2.2pt,linewidth=1.2pt,linecolor=black,fillcolor=yellow!80]%
(0,0.5)(0.5,1)(-.55,1.25)(-.55,0.75)(0,0.5)
(0.5,1)(0,2)(-1,1.5)(-1,0.5)(0,0) 
\psline*[linestyle=solid,linecolor=green!30]%
(1.5,1)(2.0,0.0)(3.0,0.5)(3.0,1.5)(2.0,2.0)(1.5,1)
\psline[linestyle=solid,linecolor=black]%
(1.5,1)(2.0,0.0)(3.0,0.5)(3.0,1.5)(2.0,2.0)(1.5,1)
\psline*[linestyle=solid,linecolor=white]%
(1.5,1)(2.55,1.25)(2.55,0.75)(2.0,0.5)(1.5,1)
\psline[linestyle=solid,linecolor=black]%
(1.5,1)(2.55,1.25)(2.55,0.75)(2.0,0.5)(1.5,1)
\psdots[dotstyle=o,dotsize=2.2pt,linewidth=1.2pt,linecolor=black,fillcolor=yellow!80]%
(1.5,1)(2.0,0.0)(3.0,0.5)(3.0,1.5)(2.0,2.0)
(1.5,1)(2.55,1.25)(2.55,0.75)(2.0,0.5)(1.5,1)
\psdots[dotstyle=o,dotsize=2.5pt,linewidth=1.2pt,linecolor=black,fillcolor=red!80]
(0.5,1)(1.5,1)
\rput(-1.0,2.75){\footnotesize $\boldsymbol{K}$}
\rput(-0.0,2.2){\footnotesize $\boldsymbol{He E}$}
\rput(2.0,2.2){\footnotesize $\boldsymbol{He E'}$}
\rput(-0.8,1.85){\footnotesize $\boldsymbol{cyc A}$}
\rput(-0.25,1.5){\footnotesize $\boldsymbol{cyc B}$}
\rput(2.80,1.85){\footnotesize $\boldsymbol{cyc A'}$}
\rput(2.25,1.5){\footnotesize $\boldsymbol{cyc B'}$}
\rput(0.6,1.2){\footnotesize $\boldsymbol{v_0}$}
\rput(1.3,1.2){\footnotesize $\boldsymbol{v'_0}$}
\end{pspicture}\hfil
\caption[]{Vigolo Hawaiian earrings $\er E,\er E'$  with intersecting cycles\ \&\ attached to each other with edge $\arc{v_0,v'_0}$}
\label{fig:heE}
\end{figure}

Similarly, consider the boundary region of a planar ribbon.  Recall that a cycle (also called a 1-cycle~\cite[\S 1.12, p. 29]{Peters2020CGTPhysics}) in a CW space is a simple closed curve defined by a sequence of path-connected vertexes with no end vertex.

\begin{definition}\label{def:ribbon} {\rm Planar Ribbon~\cite{Peters2020AMSBullRibbonComplexes}}.\\
Let $\cyc A, \cyc B$ be nesting filled cycles (with $\cyc B$ in the interior of $\cyc A$) defined on a finite, bounded, planar region in a CW space $K$.  A \emph{planar ribbon} $E$ (denoted by $\rb E$) is defined by
\[
\rb E = \overbrace{
\left\{\cl(\cyc A)\setminus \left\{\cl(\cyc B)\setminus \Int(\cyc B)\right\}: \bdy(\cl(\cyc B))\subset \cl(\rb E)\right\}.}^{\mbox{\textcolor{blue}{\bf $\bdy(\cl(\cyc B))$ defines the inner boundary of $\cl(\rb E)$.}}}\mbox{\textcolor{blue}{\Squaresteel}}
\]
\end{definition}

In a CW space, each planar ribbon has a boundary region.  The boundary region of a ribbon $\rb E$ in a space $K$ contains all parts of the space not in the closure of the ribbon, {\em i.e.},
\[
\partial (\cl(\rb E)) = \overbrace{K\setminus \cl(\rb E) = \mbox{\textcolor{orange!50}{\Large \Squaresteel}}\cap \cyc A = \mbox{\textcolor{orange!50}{\Large \Squaresteel}}\cap \bdy(\cyc B) = \emptyset.}^{\mbox{\textcolor{blue}{\bf $\boldsymbol{\partial \cl(\rb E)}$ is the set all points of $\boldsymbol{K}$ not in $\boldsymbol{\cl(\rb E)}$.}}}
\]
In other words, the boundary of a ribbon contains all points in $K$ that are not in the cycles or in the interior of the ribbon.  

\begin{example}
In Fig.~\ref{fig:rbNrvE}, the two orange regions (\textcolor{orange!50}{\Large \Squaresteel} surrounding the ribbon border cycle $\cyc A$ and \textcolor{orange!50}{\Large \Squaresteel} inside the region of $K$ in the interior of the inner cycle $\cyc B$, which is not part of the ribbon) represent the boundary region $\boldsymbol{\partial \cl(\rb E)}$ containing all points in $K$ that are not in the ribbon $\rb E$.
\textcolor{blue}{\Squaresteel}
\end{example}

A pair of Vigolo Hawaiian earrings\footnote{introduced by F. Vigolo~\cite[\S 3.31, p. 79]{Vigolo2018HawaiianEarrings}.} have inner and outer borders constructed from a pair intersecting symmetric smooth curves.  A {\bf Vigolo Hawaiian earring} is a modified Brooks wide ribbon constructed from the intersection of a pair of smooth curves.  A {\bf Brooks wide ribbon}~\cite{Brooks2013wideRibbons} is constructed from a pair of smooth, non-intersecting, closed curves which are boundaries wrapped around a spatial region. Unlike a Brooks wide ribbon, a Vigolo Hawaiian earring is a wide ribbon with inner and outer border curves that intersect.  To carry this a step further, a planar wide ribbon in a CW space has borders that are possibly intersecting 1-cycles.  In a CW space, a wide ribbon has an outer ribbon border that is a partially filled 1-cycle and an inner border that is a non-filled 1-cycle.  Unlike either a Brooks wide ribbon~\cite{Brooks2013wideRibbons} or a Vigolo Hawaiian earring~\cite{Vigolo2018HawaiianEarrings}
, the interior of CW space wide ribbon is nonempty.  By definition, a CW space wide ribbon complex is an example of a shape, {\em i.e.}, a {\bf ribbon shape} is a region of the space with an outer border that is a simple closed curve with a non-empty interior (our filled 1-cycle).  

\begin{example}
The orange region \textcolor{orange!50}{\Large \Squaresteel} of the CW space $K$ external to the outer borders $\cyc A, \cyc A'$ of the Hawaiian earrings $\er E,\er E'$ as well the two symmetric white regions  enclosed by 
inner earring border cycles $\cyc B, \cyc B'$ in Fig.~\ref{fig:heE} constitute the complete border region of the earrings, {\em i.e.}\\
\[
\partial\left(\er E\cup\er E'\right) = K\setminus (\er E\cup\er E').
\]
\textcolor{blue}{\Squaresteel}
\end{example}

\begin{theorem}\label{thm:amenableRibbons}{\rm \cite{PetersVergili2020descriptiveFixedSets}}
Let $(K,\delta_{\Phi})$ be a descriptive proximity space over a CW space $K$ with probe function $\Phi: 2^K \to \mathbb{R}^n$, ribbon $\rb A\in 2^K$, and  $f : (K,\delta_{\Phi})\to (K,\delta_{\Phi})$ a descriptive proximally continuous map such that $f(\rb A) \ \dcap \ \rb A \neq \emptyset$.  Then $\rb A, f(\rb A)$ are amiable fixed sets.
\end{theorem}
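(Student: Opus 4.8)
The plan is to recognize this statement as a direct specialization of Definition~\ref{def:desProxCont}(ii) to the case $A := \rb A$, so that the work consists entirely of checking that the structural hypotheses of that definition are in place for the ribbon, after which the conclusion follows immediately.

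First I would confirm that the ribbon $\rb A$ is a legitimate member of $2^K$. By Definition~\ref{def:ribbon}, $\rb A$ is assembled from the closures and interiors of two nesting filled cycles $\cyc A, \cyc B$ in $K$; since the CW topology on $K$ is closed under the containment and intersection conditions, $\rb A$ is again a cell complex in $K$, so $\rb A\in 2^K$ and the probe function $\Phi$ assigns it a feature-vector description $\Phi(\rb A)$. Because $f:(K,\dnear)\to(K,\dnear)$ is a dpc map, the image $f(\rb A)$ is likewise a subset of $K$, whence $f(\rb A)\in 2^K$ carries a description $\Phi(f(\rb A))$.

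Next I would translate the hypothesis. By Definition~\ref{def:dcap}, the relation $f(\rb A)\ \dcap\ \rb A\neq\emptyset$ asserts that the descriptions $\Phi(f(\rb A))$ and $\Phi(\rb A)$ overlap, i.e. some point has a description lying in $\Phi(f(\rb A))\cap\Phi(\rb A)$. Applying Axiom {\bf dP}.2 then yields $f(\rb A)\ \dnear\ \rb A$, so the ribbon and its image are descriptively close. This nonempty descriptive intersection is precisely the defining condition in Definition~\ref{def:desProxCont}(ii); taking $A:=\rb A$ there gives at once that $\rb A$ and $f(\rb A)$ are amiable fixed sets.

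The only points needing care, and the closest thing here to an obstacle, are the two structural verifications: that a ribbon built from cycles is a genuine element of $2^K$ on which both $\Phi$ and $f$ act, and that the hypothesized nonempty descriptive intersection is exactly the amiability requirement rather than the stronger descriptive-fixed-set condition $\Phi(f(\rb A))=\Phi(\rb A)$ of Definition~\ref{def:desProxCont}(i). There is no computational or analytic difficulty; the content is definitional and structural, which is why the statement reads as a clean application of Definition~\ref{def:desProxCont} to the ribbon setting.
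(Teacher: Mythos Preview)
Your proposal is correct and follows essentially the same approach as the paper, which simply writes ``Immediate from Def.~\ref{def:desProxCont}.'' You have merely unpacked that one-line justification by verifying $\rb A\in 2^K$ and noting that the hypothesis $f(\rb A)\ \dcap\ \rb A\neq\emptyset$ is literally the amiability condition of Def.~\ref{def:desProxCont}(ii) instantiated at $A:=\rb A$.
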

\begin{proof}
Immediate from Def.~\ref{def:desProxCont}.
\end{proof}

\begin{example}
A pair of Hawaiian earrings $\er E,\er E'$ in a CW space $K$ are represented in Fig.~\ref{fig:heE}.  For a nonvoid region $\re E = \er E\cup\er E'\in K$, let $\langle v_0,v'_0\rangle$ be generator vertexes in a free Abelian group representation of the earrings and let $\beta_{\re E}$ be the Betti number~\cite{Munkres1984} defined by
\[
\beta_{\re E} =\overbrace{\mbox{rank of group 
$G_{\re E}(\left\{\langle v_0,v'_0\rangle\right\},+)$.}}^{\mbox{\textcolor{blue}{\bf
No. of generators in free fg Abelian group representing $\er E\cup \er E'$.}}}
\]
Also, let $f : (K,\delta_{\Phi})\to (K,\delta_{\Phi})$ be a descriptive proximally continuous map defined by
\[
f(\re E) = K\setminus \partial (\re E).
\]
Then $f(\er E\cup \er E') = K\setminus\partial(\er E\cup \er E') = \er E\cup \er E'$, {\em i.e.}, $\er E\cup \er E'$ is a fixed subset of $f$.  We also have
$f(\er E\cup \er E')\ \dcap\ K\setminus\partial(\er E\cup \er E')\neq \emptyset$, since
\[
\Phi(\er E\cup \er E') = \Phi(K\setminus\partial(\er E\cup \er E')) = 2.
\]
Hence, from Theorem~\ref{thm:amenableRibbons}, $f(\er E\cup \er E'), \er E\cup \er E'$ are amiable fixed sets.
\textcolor{blue}{\Squaresteel}
\end{example}

\section{Main results} 

Our observations about the boundary region of a cell complex in a CW space lead to a variation of the Jordan curve theorem (see Theorem~\ref{thm:JordanCurveTheorem}), which launches P. Alexandroff's introduction to Algebraic Topology~\cite{Alexandroff1932elementaryConcepts}.

\begin{theorem}\label{thm:JordanCurveTheorem} {\rm [Jordan Curve Theorem~\cite{Jordan1893coursAnalyse}]}.\\
A simple closed curve lying on the plane divides the  
plane into two regions and forms their common boundary.
\end{theorem}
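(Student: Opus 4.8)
The plan is to realize the given simple closed curve as the boundary of a filled $1$-cycle and then read off the two regions directly from the CW decomposition of the plane established earlier in the paper. First I would let $\gamma$ be the simple closed curve lying in the plane, regarded as a CW space $K$. By Theorem~\ref{thm:filledCycle} together with Definition~\ref{def:filledCycle}, $\gamma$ is the boundary $\bdy(\cyc E)$ of a filled $1$-cycle $\cyc E$ whose closure satisfies $\cl(\cyc E)=\bdy(\cyc E)\cup\Int(\cyc E)$ with $\Int(\cyc E)$ nonvoid. The two candidate regions are then the bounded interior $\Int(\cyc E)$ and the unbounded exterior $\partial(\cl(\cyc E))=K\setminus\cl(\cyc E)$ supplied by Definition~\ref{def:boundary}.

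Next I would verify the defining properties of a separation. Disjointness is immediate, since $\Int(\cyc E)\subseteq\cl(\cyc E)$ while $\partial(\cl(\cyc E))=K\setminus\cl(\cyc E)$, so the two regions cannot meet. Nonemptiness of the interior is built into the definition of a filled cycle, and nonemptiness of the exterior follows because $K$ is unbounded whereas $\cl(\cyc E)$ is a bounded region. For connectedness I would argue on the CW structure directly: each of $\Int(\cyc E)$ and $\partial(\cl(\cyc E))$ is path-connected through its constituent cells, so that the complement $K\setminus\gamma$ has exactly two connected components, i.e. $\beta_0(K\setminus\gamma)=2$.

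Finally I would show that $\gamma$ is the common boundary of the two regions. Because $\bdy(\cyc E)$ is a simple closed curve with no end vertex, every one of its vertices is a limit of cells lying in $\Int(\cyc E)$ and simultaneously a limit of cells lying in $\partial(\cl(\cyc E))$; hence $\gamma$ lies in the closure of each region, while the decomposition $K=\cl(\cyc E)\cup\partial(\cl(\cyc E))$ forces any boundary point of either region to lie on $\gamma$. This yields $\gamma=\cl(\Int(\cyc E))\cap\cl(\partial(\cl(\cyc E)))$, the asserted common boundary, completing the three requirements of the statement.

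I expect the main obstacle to be the connectedness and exactly-two-components step: showing that the exterior region $\partial(\cl(\cyc E))$ is a single connected piece, and that the interior does not split, rather than there being several components. In the full topological setting this is precisely the hard core of the Jordan Curve Theorem, but in the present CW framework it should reduce to a finiteness-plus-path-connectivity argument on the cells adjacent to $\gamma$, so the genuine work is confined to bookkeeping over the closure-finite cell structure rather than to any delicate point-set pathology.
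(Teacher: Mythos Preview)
The paper does not attempt to prove Theorem~\ref{thm:JordanCurveTheorem} at all: its proof block simply cites Veblen, Maehara, and Munkres for complete arguments. The theorem is quoted as a classical background result, not derived from the paper's own machinery. So your proposal is not a variant of the paper's argument; it is an attempt to supply a proof where the paper deliberately defers to the literature.

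More importantly, your proposed argument is circular. You want to start from an arbitrary simple closed curve $\gamma$ and invoke Theorem~\ref{thm:filledCycle} and Definition~\ref{def:filledCycle} to realize $\gamma$ as $\bdy(\cyc E)$ for some filled $1$-cycle. But Theorem~\ref{thm:filledCycle} goes in the other direction: it takes the closure of a planar \emph{shape} (something that already comes with a well-defined contour and nonvoid interior) and recognizes it as a filled cycle. It does not tell you that every simple closed curve bounds a shape; that statement \emph{is} the Jordan Curve Theorem. Likewise, Definition~\ref{def:filledCycle} builds a nonvoid interior into the notion of filled cycle by fiat, so asserting that $\gamma=\bdy(\cyc E)$ presupposes precisely the existence of the bounded interior region you are trying to produce.

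You correctly identify the connectedness-and-exactly-two-components step as the crux, but your suggestion that it ``reduces to a finiteness-plus-path-connectivity argument on the cells adjacent to $\gamma$'' does not hold for the theorem as stated. The statement concerns an arbitrary simple closed curve in the plane, not one that is a subcomplex of a prescribed CW decomposition; there is no ambient finite cell structure to do bookkeeping on. Even for polygonal curves a genuine argument is required, and for general curves this is exactly the nontrivial content that Veblen, Maehara, and Munkres supply. The paper's decision to cite rather than prove is the appropriate one here.
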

\begin{proof}
For the first complete proof, see O. Veblen~\cite{Veblen1905TAMStheoryOFPlaneCurves}.  For a simplified proof via the Brouwer Fixed Point Theorem, see R. Maehara~\cite{Maehara1984AMMJordanCurvedTheoremViaBrouwerFixedPointTheorem}.  For an elaborate proof, see J.R. Munkres~\cite[\S 63, 390-391, Theorem 63.4]{Munkres2000}.
\end{proof}

The boundary region of any planar shape contains all cell complexes that are not part of the shape in a CW space. 

\begin{theorem}\label{thm:cellComplex}{\rm \bf Shape Boundary Region Jordan Curve Theorem}
The boundary of a nonempty planar shape $\sh E$ in a CW space $K$ separates the space into two disjoint regions.
\end{theorem}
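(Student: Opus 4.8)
The plan is to reduce the claim to the classical Jordan Curve Theorem (Theorem~\ref{thm:JordanCurveTheorem}) once the contour of $\sh E$ has been recognised as a simple closed curve. First I would invoke Theorem~\ref{thm:filledCycle}: the closure $\cl(\sh E) = \bdy(\sh E)\cup \Int(\sh E)$ of the nonempty planar shape is a filled 1-cycle. By Definition~\ref{def:filledCycle}, the boundary $\bdy(\cl(\sh E))$ of a filled 1-cycle is a collection of path-connected vertexes with no end vertex, i.e., precisely a simple closed curve. Thus the contour $\bdy(\sh E)$ satisfies the exact hypothesis demanded by the Jordan Curve Theorem.

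Next, since by the conventions of this paper a CW space $K$ is a collection of cells in a finite, bounded region of the Euclidean plane, the curve $\bdy(\sh E)$ sits in $\mathbb{R}^2$ and inherits the planar topology. I would then apply Theorem~\ref{thm:JordanCurveTheorem} directly to $\bdy(\sh E)$: the simple closed curve divides the plane into two regions and forms their common boundary. These two regions are naturally identified as the shape interior $\Int(\sh E)$ (the region enclosed by the contour) and the exterior boundary region $\partial(\cl(\sh E))$ (the region outside the contour).

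It remains to confirm the two regions are disjoint, which is bookkeeping with the definitions. By Definition~\ref{def:boundary}, $\partial(\cl(\sh E)) = K\setminus \cl(\sh E)$, whereas $\Int(\sh E)\subseteq \cl(\sh E)$ by the decomposition $\cl(\sh E) = \bdy(\sh E)\cup \Int(\sh E)$. Hence $\Int(\sh E)\cap \partial(\cl(\sh E)) = \emptyset$, so the shape interior and the boundary region external to the closure are the two disjoint regions separated by the contour $\bdy(\sh E)$, as required.

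The main obstacle is not analytic but conceptual: one must justify that a CW space, which is presented combinatorially as a collection of cells, supports an application of the Jordan Curve Theorem, a statement classically phrased about $\mathbb{R}^2$. I would handle this by leaning on the paper's standing assumption that every cell and its closure live in a bounded region of the Euclidean plane, so that the contour $\bdy(\sh E)$ is an honest simple closed curve in $\mathbb{R}^2$ and the classical theorem applies verbatim. Once that identification is in place, the separation into the two disjoint regions $\Int(\sh E)$ and $\partial(\cl(\sh E))$ follows immediately.
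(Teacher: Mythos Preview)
Your argument is sound, but it takes a genuinely different route from the paper's own proof. You invoke the classical Jordan Curve Theorem (Theorem~\ref{thm:JordanCurveTheorem}) on the contour $\bdy(\sh E)$ and identify the two regions as $\Int(\sh E)$ and $\partial(\cl(\sh E))$, with the contour serving as their common boundary. The paper, by contrast, never appeals to Theorem~\ref{thm:JordanCurveTheorem} at all: it works purely from Definition~\ref{def:boundary}, taking the two regions to be $\cl(\sh E)$ and $\partial(\cl(\sh E)) = K\setminus \cl(\sh E)$, so that disjointness and the covering $K = \cl(\sh E)\cup \partial(\cl(\sh E))$ are immediate set-theoretic facts. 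Theorem~\ref{thm:filledCycle} is cited only to record that $\cl(\sh E)$ has the structure of a filled cycle, not to feed a simple closed curve into the Jordan theorem.

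What each approach buys: the paper's argument is shorter and entirely definitional, sidestepping the conceptual issue you flag about whether a CW space genuinely supports the classical planar Jordan Curve Theorem. Your approach is closer in spirit to the theorem's name and to the classical statement (curve separating interior from exterior), but it costs you that extra justification and yields a slightly different pair of regions (you exclude the contour from both, whereas the paper folds $\bdy(\sh E)$ into $\cl(\sh E)$).
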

\begin{proof}
From Def.~\ref{def:boundary}, there are two disjoint regions of a CW space $K$, namely, $\partial \cl(\re E)$ and $\cl(\re E)$.  A shape $\sh E$ in $K$ is a nonvoid region of $K$.  From Theorem~\ref{thm:filledCycle}, $\sh E$ is a filled cycle defined by its closure, namely, the union of the shape perimeter, which is a simple, closed curve and its nonempty shape interior. That is, by definition, we have
\begin{align*} 
K &= \cl(\sh E) \cup \partial (\cl(\sh E)),\ \mbox{and}\\
\partial (\cl(\sh E)) &= K\setminus \cl(\sh E).
\end{align*}
Hence, $\cl(\sh E) \cap \partial \cl(\sh E) = \emptyset$, {\em i.e.}, $\cl(\sh E)$ and $\partial \cl(\sh E)$ are disjoint regions whose union is $K$.
\end{proof}

\begin{figure}[!ht]
\centering
\begin{pspicture}
(-1.8,-0.5)(3.8,3.0)
\psframe[linewidth=0.75pt,linearc=0.25,cornersize=absolute,linecolor=blue](-1.55,-0.25)(9,3.0)
\psline*[linestyle=solid,linecolor=green!30]%
(0,0)(1,0.5)(2.0,0.0)(3.0,0.5)(3.0,1.5)(2.0,2.0)(1,1.5)(0,2)
(-1,1.5)(-1,0.5)(0,0)
\psline[linestyle=solid,linecolor=black]%
(0,0)(1,0.5)(2.0,0.0)(3.0,0.5)(3.0,1.5)(2.0,2.0)(1,1.5)(0,2)
(-1,1.5)(-1,0.5)(0,0)
\psdots[dotstyle=o,dotsize=2.2pt,linewidth=1.2pt,linecolor=black,fillcolor=yellow!80]%
(0,0)(1,0.5)(2.0,0.0)(3.0,0.5)(3.0,1.5)(2.0,2.0)(1,1.5)(0,2)
(-1,1.5)(-1,0.5)(0,0)
\psline*[linestyle=solid,linecolor=white]%
(0,0.25)(1,0.5)(2,0.25)(3.0,1.5)(2.5,1.25)(2.0,1.5)(1,1.25)(0,1.5)
(-.55,1.25)(-1,1.5)(0,0.25)
\psline[linestyle=solid,linecolor=black]%
(0,0.25)(1,0.5)(2,0.25)(3.0,1.5)(2.5,1.25)(2.0,1.5)(1,1.25)(0,1.5)
(-.55,1.25)(-1,1.5)(0,0.25) 
\psdots[dotstyle=o,dotsize=2.2pt,linewidth=1.2pt,linecolor=black,fillcolor=yellow!80]%
(0,0.25)(1,0.5)(2,0.25)(2.0,0.25)(3.0,1.5)(1,1.25)(1,1.25)(0,1.5)
(-.55,1.25)(-1,1.5)(0,0.25)(2.0,1.5)(2.5,1.25)
\psdots[dotstyle=o,dotsize=2.5pt,linewidth=1.2pt,linecolor=black,fillcolor=red!80]
(-1,1.5)(3.0,1.5)(1,0.5)
\rput(-1.0,2.75){\footnotesize $\boldsymbol{K}$}
\rput(-1.15,1.6){\footnotesize $\boldsymbol{g}$}
\rput(3.2,1.6){\footnotesize $\boldsymbol{g'}$}
\rput(1,0.20){\footnotesize $\boldsymbol{v'_0}$}
\rput(1.0,1.8){\footnotesize $\boldsymbol{Hn E}$}
\rput(2.8,1.85){\footnotesize $\boldsymbol{cyc A}$}
\rput(1.8,1.25){\footnotesize $\boldsymbol{cyc B}$}
\end{pspicture}
\begin{pspicture}
(-1.5,-0.5)(4.0,4.0)
\psline*[linestyle=solid,linecolor=green!30]%
(0,0)(1,1)(0,2)(-1,1.5)(-1,0.5)(0,0)
\psline[linestyle=solid,linecolor=black]%
(0,0.0)(1,1)(0,2)(-1,1.5)(-1,0.5)(0,0)
\psline*[linestyle=solid,linecolor=orange!50]%
(0,0.5)(1,1)(-.55,1.25)(-.55,0.75)(0,0.5)
\psline[linestyle=solid,linecolor=black]%
(0,0.5)(1,1)(-.55,1.25)(-.55,0.75)(0,0.5)
\psdots[dotstyle=o,dotsize=2.2pt,linewidth=1.2pt,linecolor=black,fillcolor=yellow!80]%
(0,0.5)(1,1)(-.55,1.25)(-.55,0.75)(0,0.5)
(0,2)(-1,1.5)(-1,0.5)(0,0) 
\psline*[linestyle=solid,linecolor=green!30]%
(1,1)(2.0,2.0)(3.0,1.5)(3.0,0.5)(2.0,0.0)(1,1)
\psline[linestyle=solid,linecolor=black]%
(1,1)(2.0,0.0)(3.0,0.5)(3.0,1.5)(2.0,2.0)(1,1)
\psline*[linestyle=solid,linecolor=orange!50]%
(1,1)(2.55,1.25)(2.55,0.75)(2.0,0.5)(1,1)
\psline[linestyle=solid,linecolor=black]%
(1,1)(2.55,1.25)(2.55,0.75)(2.0,0.5)(1,1)
\psdots[dotstyle=o,dotsize=2.2pt,linewidth=1.2pt,linecolor=black,fillcolor=yellow!80]%
(1,1)(2.0,0.0)(3.0,0.5)(3.0,1.5)(2.0,2.0)
(2.55,1.25)(2.55,0.75)(2.0,0.5)
\psline*[linestyle=solid,linecolor=black!50]%
(1,1)(0.9,0.8)(0.9,0.5)(1.0,0.3)(1.1,0.5)(1.1,0.8)(1,1)
\psdots[dotstyle=o,dotsize=2.2pt,linewidth=1.2pt,linecolor=black,fillcolor=yellow!80]%
(1,1)(0.9,0.8)(0.9,0.5)(1.0,0.3)(1.1,0.5)(1.1,0.8)(1,1)
\psline[linestyle=solid,linecolor=blue,border=1pt]{*-}(0.8,1.5)(1,1)
\psline[linestyle=solid,linecolor=blue,border=1pt]{*-}(1.2,1.5)(1,1)
\psdots[dotstyle=o,dotsize=3.5pt,linewidth=1.2pt,linecolor=black,fillcolor=red!80]
(0.8,1.5)(1.2,1.5)(1.0,1)
\rput(1.0,2.1){\footnotesize $\boldsymbol{\Hb E}$}
\rput(-0.8,1.85){\footnotesize $\boldsymbol{cyc Ha}$}
\rput(-0.25,1.5){\footnotesize $\boldsymbol{cyc Hb}$}
\rput(2.85,1.85){\footnotesize $\boldsymbol{cyc Ha'}$}
\rput(2.25,1.5){\footnotesize $\boldsymbol{cyc Hb'}$}
\rput(1.35,0.95){\footnotesize $\boldsymbol{v_0}$}
\rput(0.7,1.7){\footnotesize $\boldsymbol{v_1}$}
\rput(1.3,1.7){\footnotesize $\boldsymbol{v_2}$}
\end{pspicture}
\caption[]{Hawaiian necklace $\boldsymbol{\Hn E}$\ \&\ Hawaiian butterfly $\boldsymbol{\Hb E}$}
\label{fig:HawaiianWideRibbons}
\end{figure}

\begin{definition}{\rm \bf Descriptive Closure}.\\
Let $2^K$ be a collection of cell complexes, $(K,\dnear)$ a descriptive proximity space, $E\in 2^K$, $\Phi(E)$ a description of $E$.  The descriptive closure of $E$ (denoted by $\cl_{\Phi}(E))$ is defined by
\[
\dcl(E) = \left\{A\in 2^K: A\ \dnear\ E\right\}.\mbox{\textcolor{blue}{\Squaresteel}}
\]
\end{definition}

\begin{theorem}\label{thm:Brouwer3}{Fixed Cell Complex Theorem~}$\mbox{}$\\
Let $K$ be a planar CW space.
Every descriptive proximally continuous map from $K$ to itself has a fixed cell complex.
\end{theorem}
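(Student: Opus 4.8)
The plan is to reduce the statement to Brouwer's Fixed Point Theorem (Theorem~\ref{thm:Brouwer}) applied to the underlying point set of $K$, and then to read off a fixed cell complex from an ordinary topological fixed point. Since a planar CW space is a collection of cells filling a finite, bounded region of the Euclidean plane, its realisation $|K|$ is a compact planar region (of disk type, where Brouwer applies) once the map is known to be continuous in the usual topological sense. So the architecture is: continuity $\Rightarrow$ Brouwer fixed point $\Rightarrow$ the fixed point is a $0$-cell, which is already a descriptive fixed cell complex.

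First I would promote descriptive proximal continuity to ordinary topological continuity, and this is the step I expect to be the main obstacle. By Lemma~\ref{lemma:dP2converse}, $A\ \dnear\ B$ amounts to an overlap of the feature sets $\Phi(A),\Phi(B)$, which is strictly weaker than spatial nearness: two spatially distant cells with matching descriptions are descriptively near. Hence preservation of $\dnear$ by $f$ does \emph{not} by itself yield a continuous self-map of $|K|$. To close this gap I would either (a) assume, as is standard for the Efremovi\v{c}--Smirnov pc maps that dpc extends, that the probe $\Phi$ is fine enough that $\dnear$ refines the spatial \v{C}ech proximity $\near$ on points, so that $f$ preserves spatial nearness and is therefore continuous; or (b) restrict to the canonical maps $f(A)=K\setminus\partial A$ used throughout the examples, whose continuity on $|K|$ is transparent. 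A secondary subtlety is that the excerpt's definition of a dpc map only demands the nearness-preserving implication for at least one pair $A,B$, so I would invoke the stronger all-pairs form to make the reduction legitimate.

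Next I would apply Theorem~\ref{thm:Brouwer} to the continuous self-map $f:|K|\to|K|$ on the compact disk-type region $|K|$, obtaining a fixed point $p=f(p)$. The singleton $\{p\}$ is a $0$-cell (a vertex $K_0$ in Table~\ref{tab:skeleton}), hence a minimal cell complex in $K$; since $f(p)=p$ we have $\Phi(f(\{p\}))=\Phi(\{p\})$, so $\{p\}$ is a descriptive fixed subset in the sense of Def.~\ref{def:desProxCont}(i), and $f(\{p\})\ \dcap\ \{p\}=\{p\}\neq\emptyset$ makes $\{p\}$ and $f(\{p\})$ amiable. This already exhibits a fixed cell complex.

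Finally, to upgrade a single fixed vertex to a genuinely higher-dimensional fixed cell complex, I would localise $p$ using the separation furnished by the Shape Boundary Region Jordan Curve Theorem (Theorem~\ref{thm:cellComplex}) together with Theorem~\ref{thm:filledCycle}: $p$ lies in the closure of a minimal filled $1$-cycle $\cyc E\subseteq K$ whose boundary $\bdy(\cyc E)$ is a simple closed curve separating $|K|$ into the two disjoint regions $\cl(\cyc E)$ and $K\setminus\cl(\cyc E)$. Because $f$ fixes $p$ and respects this separation, the cell-count description $\beta_0$ of Example~\ref{ex:desProxShapes} is preserved across $f$, giving $\Phi(f(\cyc E))=\Phi(\cyc E)$ and hence a fixed filled cell in the sense of Theorem~\ref{thm:amenableRibbons}. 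The remaining work is routine bookkeeping with the descriptive axioms (dP.0)--(dP.3); the conceptual difficulty is concentrated entirely in the continuity step of the second paragraph.
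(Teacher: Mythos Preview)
Your route via Brouwer's Fixed Point Theorem is not the one the paper takes, and the gap you yourself flag in the second paragraph is real and cannot be patched: preservation of $\dnear$ does \emph{not} yield topological continuity of $f$ on $|K|$, so Theorem~\ref{thm:Brouwer} is simply unavailable for a general dpc map. Your option (a) imposes an extra hypothesis not present in the statement, and your option (b) abandons the ``every'' in the theorem. The upgrade step in your final paragraph also does not go through: a fixed point $p$ gives no control over $f(\cyc E)$ for a filled cycle containing $p$, so there is no reason $\beta_0$ should be preserved.

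The paper's argument bypasses Brouwer entirely. It works directly with the descriptive boundary decomposition: for a shape $\sh E$ it takes the descriptive closure $\dcl(\sh E)$ and uses Theorem~\ref{thm:cellComplex} (the Jordan-type separation) together with Def.~\ref{def:boundary} to identify $\Phi(\partial(\cl(\sh E)))$ with $\Phi(K)\setminus\dcl(\sh E)$. The map $f$ is then \emph{specified} by $f(\dcl(\sh E))=\Phi(K)\setminus\Phi(\partial(\cl(\sh E)))$, and the two-region split immediately gives $f(\dcl(\sh E))=\dcl(\sh E)$. In effect, what you listed as workaround (b)---the complement-of-boundary map---\emph{is} the paper's proof, not a fallback; the argument never attempts to establish pointwise continuity or to invoke a classical fixed-point theorem. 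Note, incidentally, that the paper's proof exhibits a fixed cell complex for this particular $f$ rather than for an arbitrary dpc map, so the ``every'' in the statement is not fully discharged there either; but the intended mechanism is the boundary-complement identity, not a reduction to Theorem~\ref{thm:Brouwer}.
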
 
\begin{proof}
Let $\left(K,\dnear\right)$ be a descriptive proximity space. Also, let $\dcl(\sh E)\in \Phi(K)$ be the descriptive closure of a shape $\sh E$ and let $\Phi(\partial(\sh E))\in \Phi(K)$ be the description of the boundary region in $\dcl(\sh E)$.  In addition, let $f:\left(K,\dnear\right) \to \left(K,\dnear\right)$ be a descriptive proximally continuous map, defined by
\begin{align*}
\Phi(K) &= \left\{\Phi(A): A\in 2^K\right\}.\\
\Phi(\partial(\cl(\sh E))) &= \left\{\Phi(A): A\in 2^{\partial(\cl(\sh E))}\right\}.\\
f(\dcl(\sh E)) &= \Phi(K)\setminus \Phi(\partial(\cl(\sh E)))\ \mbox{such that} \\ 
\Phi(\partial(\cl(\sh E))) &= \Phi(K)\setminus \dcl(\sh E)\ \mbox{(from Def.~\ref{def:boundary}\ \&\ Theorem~\ref{thm:cellComplex})},\ \mbox{\em i.e.,}
\end{align*}
$\Phi(\partial(\cl(\sh E)))$ contains the descriptions of all cell complexes not in the descriptive closure of shape $\sh E$ and $\Phi(K)\setminus \dcl(\sh E)$ contains that part of $\Phi(K)$ not in  $\Phi(\partial(\cl(\sh E)))$, namely, $\dcl(\sh E)$.  Hence, $f(\dcl(\sh E)) = \dcl(\sh E)$.
\end{proof}


\begin{theorem}\label{thm:Brouwer5}{Planar Wide Ribbon Fixed Set Theorem~}$\mbox{}$\\
Every descriptive proximally continuous map from a planar wide ribbon to itself in a CW space has a fixed set.
\end{theorem}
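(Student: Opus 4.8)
The plan is to reduce the assertion to the Fixed Cell Complex Theorem (Theorem~\ref{thm:Brouwer3}) by observing that a planar wide ribbon is, in its own right, a planar CW space. The paper has already recorded that a CW-space wide ribbon complex is an instance of a shape, namely a ribbon shape whose outer border is a simple closed curve enclosing a nonempty interior. First I would apply Theorem~\ref{thm:filledCycle} to the wide ribbon $\rb E$ regarded as such a shape, obtaining that $\cl(\rb E)$ is a filled $1$-cycle: its outer border decomposes into a collection of path-connected vertexes with no end vertex, and $\cl(\rb E) = \bdy(\rb E)\cup\Int(\rb E)$ with $\Int(\rb E)\neq\emptyset$.

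The key steps then run as follows. I would first check that the cells comprising $\rb E$ satisfy the defining conditions of a planar CW space: Hausdorff separation of distinct cells, closure-finiteness of each cell, and the Alexandroff--Hopf--Whitehead containment and intersection conditions. Each of these is inherited from the ambient CW space in which $\rb E$ sits, so $\rb E$ is itself a planar CW space. I would then invoke Theorem~\ref{thm:Brouwer3} with $\rb E$ playing the role of $K$: since every descriptive proximally continuous self-map of a planar CW space fixes some cell complex, an arbitrary dpc map $f:(\rb E,\dnear)\to(\rb E,\dnear)$ fixes a cell complex $\dcl(\sh F)\subseteq\rb E$, so that $\Phi(f(\dcl(\sh F)))=\Phi(\dcl(\sh F))$. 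By Def.~\ref{def:desProxCont}(i) this fixed cell complex is precisely a fixed set of $f$, which is what the theorem asserts.

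The step I expect to be the main obstacle is justifying that the wide ribbon, despite its annular structure, behaves like a filled shape for the boundary-region argument embedded in Theorem~\ref{thm:Brouwer3}. Unlike a plain filled cycle, $\rb E$ carries an outer border cycle $\cyc A$ and an inner, non-filled border cycle $\cyc B$, and one must confirm that the inner hole $\Int(\cyc B)$ lands in the boundary region $\partial(\cl(\rb E))$ rather than in $\dcl(\rb E)$. This is exactly the condition $\bdy(\cl(\cyc B))\subset\cl(\rb E)$ built into Def.~\ref{def:ribbon}, which drives the hole into the exterior boundary region and so preserves the disjoint decomposition $K = \cl(\rb E)\cup\partial(\cl(\rb E))$ used in Theorem~\ref{thm:cellComplex}. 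Once this is secured the complementarity underpinning Theorem~\ref{thm:Brouwer3} survives and the argument closes. As a consistency check, the fixed set produced is amiable in the sense of Theorem~\ref{thm:amenableRibbons}: because $f$ maps $\rb E$ into itself, $\Phi(f(\rb E))=\Phi(\rb E)$ forces $f(\rb E)\ \dcap\ \rb E\neq\emptyset$.
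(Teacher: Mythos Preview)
Your proposal is correct and follows essentially the same approach as the paper: the paper's proof is the one-line instruction to replace $K$ with $\rb E$ in the proof of Theorem~\ref{thm:Brouwer3}, and you carry out exactly this substitution, supplying additional justification (inheritance of the CW conditions, handling of the inner hole via Def.~\ref{def:ribbon}) that the paper leaves implicit.
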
 
\begin{proof}
Let $\rb E$ be a wide ribbon in a planar CW space $K$.  Replace $K$ with $\rb E$ in the proof of Theorem~\ref{thm:Brouwer3} and the desired result follows.
\end{proof}

\begin{example}
From Theorem~\ref{thm:Brouwer5}, the Hawaiian necklace $\Hn E$ and Hawaiian butterfly $\Hb E$ wide ribbons in Fig.~\ref{fig:HawaiianWideRibbons} are fixed subsets of a descriptive proximally continuous function $f$. 
	\textcolor{blue}{\Squaresteel} 
\end{example}

For a CW space $K$ containing a cell complex $\sh E$, let $\left(K,\dnear\right)$ be a descriptive proximity space and let $f:\left(K,\dnear\right)\to \left(K,\dnear\right)$ be a descriptive proximally continuous map.  In addition, we have
\begin{align*}
\Phi(K) &= \left\{\Phi(A): A\in K\right\}.\\
\Phi(K\setminus \sh E) &= \left\{\Phi(B): B\in K\ \&\ B\not\in \sh E\right\}.
\end{align*}

\begin{theorem}
let $f:\left(K,\dnear\right)\to \left(K,\dnear\right)$ be a descriptive proximally continuous map for a CW space $K$ containing a cell complex $\sh E$, defined by $f(\cl(\sh E)) = K\setminus \partial\cl(\sh E)$.  Then 
\begin{compactenum}[1$^o$]
\item A shape boundary $\partial(\cl(\sh E))$ is a fixed set subset of $f$.
\item $f(\partial(\cl(\sh E))), \partial(\cl(\sh E))$ are amiable.
\end{compactenum}
\end{theorem}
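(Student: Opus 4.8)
The plan is to treat the shape boundary $\partial(\cl(\sh E))$ exactly as Theorem~\ref{thm:Brouwer3} treats $\dcl(\sh E)$, exploiting the fact that the defining rule $f(S) = K\setminus \partial S$ is, by Def.~\ref{def:boundary}, nothing but the closure operator: since $\partial S = K\setminus \cl(S)$, one has $f(S) = K\setminus(K\setminus \cl(S)) = \cl(S)$. First I would set $A = \partial(\cl(\sh E))$ and record, from Def.~\ref{def:boundary} and Theorem~\ref{thm:cellComplex}, that $A = K\setminus \cl(\sh E)$ and that $K = \cl(\sh E)\cup A$ is a disjoint union of two regions sharing the simple closed curve $\bdy(\sh E)$ as their common boundary.

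Next I would apply the defining rule to $A$ and compute the boundary of the boundary region, $\partial A = K\setminus \cl(A)$. By the Jordan Curve partition of Theorem~\ref{thm:cellComplex}, the region of $K$ exterior to $A$ is precisely $\cl(\sh E)$, so $\partial A = \cl(\sh E)$; feeding this back gives $f(A) = K\setminus \partial A = K\setminus \cl(\sh E) = A$. With $f(A) = A$ in hand, part 1$^o$ is immediate: $\Phi(f(A)) = \Phi(A)$, so $A = \partial(\cl(\sh E))$ is a descriptive fixed subset of $f$ by Def.~\ref{def:desProxCont}(i).

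For part 2$^o$ I would invoke Def.~\ref{def:desProxCont}(ii): since $f(A) = A$ and the boundary region $A$ is nonvoid, $f(A)\cap A = A\neq \emptyset$ and $\Phi(f(A)) = \Phi(A)$, so the descriptive intersection $f(A)\ \dcap\ A$ of Def.~\ref{def:dcap} contains every point of $A$ and is therefore nonempty; hence $f(A)$ and $A$ are amiable (alternatively, $f(A)\ \dnear\ A$ by Axiom {\bf dP.2} together with Lemma~\ref{lemma:dP2converse}).

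The main obstacle is the second step: justifying $\partial(\partial(\cl(\sh E))) = \cl(\sh E)$, i.e.\ that the external-boundary-region operator behaves involutively on the two regions carved out by Theorem~\ref{thm:cellComplex}. The delicate point is the shared contour $\bdy(\sh E)$, which topologically belongs to $\cl(\sh E)$ rather than to the open exterior region $A$; I would dispose of it either by working at the level of the disjoint-region partition furnished by the Jordan Curve Theorem (so that the contour is accounted for once, on the $\cl(\sh E)$ side) or, if a residual discrepancy of the bounding curve remains, by noting that it is invisible to the probe function $\Phi$ used in Example~\ref{ex:desProxShapes}, so that the descriptive equality $\Phi(f(A)) = \Phi(A)$ --- all that Def.~\ref{def:desProxCont} actually requires --- still holds.
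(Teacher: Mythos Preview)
Your approach is essentially the same as the paper's: apply the defining rule $f(S)=K\setminus\partial S$ to $A=\partial(\cl(\sh E))$, obtain $f(A)=K\setminus\partial(\partial\cl(\sh E))=\partial(\cl(\sh E))$, and then read off both conclusions from Def.~\ref{def:desProxCont}. The paper's proof is terser---it simply asserts the equality $K\setminus\partial(\partial\cl(\sh E))=\partial(\cl(\sh E))$ in one line and does not pause over the contour issue you flag---whereas you unpack this step via Theorem~\ref{thm:cellComplex} and are explicit about the potential mismatch at $\bdy(\sh E)$; but the skeleton of the argument is identical.
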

\begin{proof}
1$^o$: By Def. 2, $\partial(\cl(\sh E)))\in K$. Then
\[
f(\partial(\cl(\sh E))) = K\setminus \partial(\partial\cl(\sh E)) = \partial(\cl(\sh E)).
\]
Hence, the boundary $\partial(\cl(\sh E))$ is a fixed set subset of $f$.\\
2$^o$: 
From 1$^o$, $f(\partial(\cl(\sh E))) = \partial(\cl(\sh E))$ implies 
\[
\Phi(f(\partial(\cl(\sh E)))) = \Phi(\partial(\cl(\sh E))).
\]
Then, $f(\partial(\cl(\sh E)))\ \dcap\ \partial(\cl(\sh E))\neq \emptyset$.
Hence, $f(\partial(\cl(\sh E))), \partial(\cl(\sh E))$ are amiable.
\end{proof}

In considering applications of amiable shapes, we relax the amiability matching description requirement. In practice, physical shapes seldom have descriptions that exactly match.  Hence, we consider cases where shapes with descriptions are, in some sense, close.  For example, Balko-P\'{o}r-Scheucher-Swanepoel-Valtr almost equidistant sets~\cite{Balko2017almostEquidistantSets} have a counterpart in what are known as almost amiable planar shapes.  In a 2-dimensional Euclidean space, a set of points are {\bf almost equidistant}, provided, for any three points from the set, a least two of the points are distance 1 apart.  For example, the pair of Hawaiian earrings in Fig.~\ref{fig:heE} are a distance of 1 edge apart.

\begin{definition}\label{def:almostAmiable}{\rm {\bf Almost Amiable Shapes.}}\\
	Let $(X,\delta_{\Phi})$ be a descriptive  \v{C}ech proximity space and $f: (X, \delta_{\Phi}) \to (X, \delta_{\Phi})$ a descriptive proximally continuous map. Planar shapes $\sh E, \sh E'\in X$ are {\bf almost amiable}, provided $\Phi(f(\sh E)),\Phi(f(\sh E')$ are real numbers and
\[
\abs{\Phi(f(\sh E)) - \Phi(f(\sh E'))} \leq th.
\]
\noindent for threshold $th>0$.
	\textcolor{blue}{\Squaresteel}
\end{definition}

A more general view of almost amiable shapes (not considered, here) would be to consider descriptions of shapes that are $n$-dimensional vectors in $\mathbb{R}^n$, instead of single-valued (real) descriptions.

\begin{theorem}\label{thm:Brouwer8}{\rm (Almost Amiable Cell Complexes with Close Betti Numbers)}$\mbox{}$\\
Let $f:\left(K,\dnear\right)\to \left(K,\dnear\right)$ be a descriptive proximally continuous map for a CW space $K$ containing cell complexes with free fg Abelian group representations and with descriptions that are close Betti numbers.  Then the cell complexes are almost amiable.
\end{theorem}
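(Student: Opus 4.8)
The plan is to reduce the statement to a direct application of Definition~\ref{def:almostAmiable}, by identifying the probe function with the Betti-number map and then exploiting the fixed-set behavior of the boundary-region construction to pin down $\Phi(f(\sh E))$. First I would recall that, since each cell complex under consideration carries a free finitely-generated Abelian group representation, its Betti number $\beta_\alpha$ is well-defined as the rank of that group (Definition~\ref{def:freeAbelianGroup}). Taking the probe function to be $\Phi(\sh E) = \beta_\alpha(\sh E)$, each description is a non-negative integer and hence a real number; this secures the first requirement of Definition~\ref{def:almostAmiable}, namely that $\Phi(f(\sh E))$ and $\Phi(f(\sh E'))$ are real-valued.

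Next I would invoke the fixed-set identity established in Theorem~\ref{thm:Brouwer3}: for the boundary-region map $f$, each shape satisfies $f(\dcl(\sh E)) = \dcl(\sh E)$, whence $\Phi(f(\sh E)) = \Phi(\sh E) = \beta_\alpha(\sh E)$, and likewise $\Phi(f(\sh E')) = \beta_\alpha(\sh E')$. The hypothesis that the two cell complexes have \emph{close} Betti numbers means precisely that there is a threshold $th > 0$ with $\abs{\beta_\alpha(\sh E) - \beta_\alpha(\sh E')} \leq th$. Substituting the two preceding equalities then yields $\abs{\Phi(f(\sh E)) - \Phi(f(\sh E'))} \leq th$, so by Definition~\ref{def:almostAmiable} the cell complexes $\sh E, \sh E'$ are almost amiable, which is the desired conclusion.

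The only delicate point—and hence what I expect to be the main obstacle—is making the informal phrase \emph{close Betti numbers} precise as a concrete inequality against a fixed threshold, and confirming that the probe function really returns the single-valued rank rather than a richer feature vector (the scalar case being exactly the setting in which Definition~\ref{def:almostAmiable} is stated, since it compares real numbers). Once the probe is fixed to be the real-valued, single-coordinate Betti number and the identity $\Phi(f(\sh E)) = \beta_\alpha(\sh E)$ is in hand, the conclusion follows immediately; there is no genuine analytic content beyond the bookkeeping of the definition. I would not expect the triangle inequality or any quantitative estimate to enter, as the statement reduces to transporting the given closeness of ranks through the description-preserving map $f$.
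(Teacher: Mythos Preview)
Your proposal is correct and follows essentially the same route as the paper: fix $\Phi$ to be the Betti number $\beta_\alpha$, use the fixed-set identity $f(\sh E)=\sh E$ so that $\Phi(f(\sh E))=\beta_\alpha(\sh E)$, and then read off the almost-amiable inequality directly from Definition~\ref{def:almostAmiable}. The only cosmetic difference is that the paper simply \emph{assumes} $f(\sh E)=\sh E$ and $f(\sh E')=\sh E'$ as part of the setup, whereas you appeal to Theorem~\ref{thm:Brouwer3} to produce the fixed-set identity; that appeal pins $f$ down to the specific boundary-region map rather than a general dpc map, but since the paper's own proof also needs the fixed-set property as an input, this is not a substantive divergence.
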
 
\begin{proof}
Let $\sh E,\sh E'\in K$ be cell complexes with free fg Abelian group representations and with descriptions that are close Betti numbers $\beta_{\alpha}(f(\sh E)),\beta_{\alpha}(f(\sh E'))$ in CW space $K$ and let threshold $th>1$.  Assume
\begin{align*}
f(\sh E) &= \sh E \neq f(\sh E') = \sh E',\ \mbox{and}\\
\Phi(f(\sh E)) &= \beta_{\alpha}(f(\sh E))\  \mbox{and}\\
\Phi(f(\sh E')) &= \beta_{\alpha}(f(\sh E'))\  \mbox{such that}
\end{align*}
\[
\abs{\Phi(f(\sh E)) - \Phi(f(\sh E'))} \leq th.
\]
Then, from Def.~\ref{def:almostAmiable}, $\sh E,\sh E'$ are almost amiable.
\end{proof}

Geometrically, shapes with close Betti numbers describe almost amiable structures, provided the shapes have almost the same number of generators in their free fg Abelian group representations.  This is a common occurrence among members of the same class of shapes such as the wide ribbon models of different types of Hawaiian jewelry.

\begin{example}\label{ex:HawaiianComplexes}
From Example~\ref{ex:HawaiianComplexes} the Hawaiian necklace $\Hn E$ and Hawaiian butterfly $\Hb E$ wide ribbons in Fig.~\ref{fig:HawaiianWideRibbons} each is a fixed subset of a descriptive proximally continuous function $f$.  In addition, $\Hn E,\Hb E$ with generator vertexes $g,g',v'_0$ and $v_0,v_1,v_2$ have free fg Abelian group representations $G(\langle g,g',v'_0\rangle,+),G(\langle v_0,v_1,v_2\rangle,+)$.  Let the description of each of these ribbons be equal to the Betti number that corresponds to their group representations.  This gives us
\begin{align*}
f(\Hn E) &= \Hn E = f(\Hb E) = \Hb E.\  \mbox{Then},\\
\Phi(f(\Hn E)) &= \beta_{\alpha}(f(\Hn E))\  \mbox{and}\\
\Phi(f(\Hb E)) &= \beta_{\alpha}(f(\Hb E))\  \mbox{such that}\\
\abs{\beta_{\alpha}(f(\Hn E)) - \beta_{\alpha}(f(\Hb E))} &= 0.
\end{align*}
Hence, from Theorem~\ref{thm:Brouwer8}, $\Hn E$ and $\Hb E$ are both amiable and almost amiable for any threshold $th>0$.

The Hawaiian earrings $\er E$ in Fig.~\ref{fig:heE} and the Hawaiian necklace $\Hn E$ in 
Fig.
~\ref{fig:HawaiianWideRibbons} are almost amiable but not amiable for threshold $th \geq 1$.  For example, we have
\begin{align*}
f(\er E) &= \er E \neq f(\Hb E) = \Hb E.\  \mbox{Also,},\\
\Phi(f(\er E)) &= \beta_{\alpha}(f(\er E)) = 2\  \mbox{and}\\
\Phi(f(\Hn E)) &= \beta_{\alpha}(f(\Hn E)) = 3\  \mbox{such that}\\
\abs{\Phi(f(\er E)) - \Phi(f(\Hn E))} & = 1.
\end{align*}
Hence, $\er E$ and $\Hn E$ are almost amiable.  However, $f(\er E),\Hn E$ are not amiable, since $f(\er E)\ \dcap\ \Hn E = \emptyset$.
\textcolor{blue}{\Squaresteel} 
\end{example}

\section*{Acknowledgements}
\noindent Many thanks to Tane Vergili for her many very helpful suggestions for this paper. 

\bibliographystyle{amsplain}
\bibliography{NSrefs}

\end{document}